\definecolor{black}{rgb}{0.0, 0.0, 0.0}
\definecolor{red}{rgb}{1.0, 0.5, 0.5}
\newcommand{\bke}[1]{\left( #1 \right)}
\newcommand{\norm}[1]{\left\Vert #1 \right\Vert}
\newcommand{\abs}[1]{\left| #1 \right|}
\newcommand{\calX}{{ \mathcal X  }}
\newcommand{\hole}[1]{
\ifthenelse{\boolean{shownotes}}%
{\begin{center} \fbox{ \rule {.25cm}{0cm} \rule[-.1cm]{0cm}{.4cm}
\parbox{.85\textwidth}{\begin{center} \texttt{#1}\end{center}} \rule
{.25cm}{0cm}}\end{center}} {} }
\title[Regular solutions of non-Newtonian
fluids] {Existence of regular solutions for a certain type of
non-Newtonian fluids}
\author[Kyungkeun Kang]{Kyungkeun Kang}
\address[Kyungkeun Kang]{\newline Department of Mathematics \newline Yonsei University, Seoul 03722, Korea (Republic of)}
\email{kkang@yonsei.ac.kr}
\author[Hwa Kil Kim]{Hwa Kil Kim}
\address[Hwa Kil Kim]{\newline Department of Mathematics Education \newline Hannam University, Daejeon, 34430, Korea (Republic of)}
\email{hwakil@hnu.kr}
\author[Jae-Myoung Kim]{Jae-Myoung Kim}
\address[Jae-Myoung Kim]{\newline Center for Mathematical Analysis \& Computation \newline  Yonsei University, Seoul 03722, Korea (Republic of)}
\email{cauchy02@naver.com}
\numberwithin{equation}{section}
\newtheorem{theorem}{Theorem}[section]
\newtheorem{lemma}[theorem]{Lemma}
\newtheorem{corollary}[theorem]{Corollary}
\newtheorem{remark}[theorem]{Remark}
\newcommand{\bbr}{\mathbb R}
\newcommand{\R}{{\mathbb R }}
\def\charf {\mbox{{\text 1}\kern-.30em {\text l}}}
\def\Xint#1{\mathchoice
   {\XXint\displaystyle\textstyle{#1}}%
   {\XXint\textstyle\scriptstyle{#1}}%
   {\XXint\scriptstyle\scriptscriptstyle{#1}}%
   {\XXint\scriptscriptstyle\scriptscriptstyle{#1}}%
   \!\int}
\def\XXint#1#2#3{{\setbox0=\hbox{$#1{#2#3}{\int}$}
     \vcenter{\hbox{$#2#3$}}\kern-.5\wd0}}
\def\aint{\Xint\diagup}
\newenvironment{pfthm1}{{\par\noindent
            \textbf{Proof of Theorem \ref{main}}\quad}}{}
\newenvironment{pfmain15}{{\par\noindent
            \textbf{Proof of Corollary \ref{main-cor}}\quad}}{}
\newenvironment{pflem2}{{\par\noindent
            \textbf{Proof of Lemma \ref{deriv-G}}\quad}}{}
\newenvironment{pflem4}{{\par\noindent
            \textbf{Proof of Lemma \ref{deriv-G : time}}\quad}}{}
\begin{document}
\allowdisplaybreaks

\date{\today}



\begin{abstract}
We are concerned with existence of regular solutions for
non-Newtonian fluids in dimension three. For a certain type of
non-Newtonian fluids we prove local existence of unique regular
solutions, provided that the initial data are sufficiently smooth.
Moreover, if the $H^3$-norm of initial data is sufficiently small,
then the regular solution exists globally in time.
\end{abstract}

\maketitle

\centerline{\date}

\vspace{-5mm} \noindent {2000 AMS Subject Classification}: 76A05,
76D03, 49N60
\newline {Keywords} : non-Newtonian fluid; regular solution

%
%
%
%
\section{Introduction}
We consider the Cauchy problem of non-Newtonian fluids in three
dimensions
\begin{equation}\label{NNS}
\left\{
\begin{array}{ll}
\displaystyle u_t- \nabla \cdot\big( G[|Du|^2] Du \big) +(u \cdot
\nabla) u +\nabla p= 0,\\
\vspace{-3mm}\\
\displaystyle \text{div} \ u =0,
\end{array}\right.
\quad \mbox{ in } \,\,\R^3\times (0,\, T)
\end{equation}
with the viscous part of the stress tensor, $G[|Du|^2]$, such that
$G : [0,\infty) \rightarrow [0,\infty)$ satisfies for any
$s\in[0,\infty)$
\begin{equation}\label{G : property}
\begin{aligned}
&G[s] \geq m_0 >0, \qquad G[s]+ 2 G^{'}[s]s \geq m_0> 0,\\
&|G^{(k)}[s] s^\alpha| \leq C_k  |G^{(k-1)}[s]| \quad \alpha \in \{
0,1 \}.
\end{aligned}
\end{equation}
Here, $G^{(k)}[\cdot]$ is the $k-$th derivative of $G$, $m_0$ and
$C_k$ are positive constants, and $Du$ denote the symmetric part of
the velocity gradient, i.e.
\begin{equation*}\label{vis}
~~Du = D_{ij}u:= \frac{1}{2}\Big(\frac{\partial u_i}{\partial
x_j}+\frac{\partial u_j}{\partial x_i}\Big), \ i,j=1,2,3.
\end{equation*}
In \eqref{NNS}, $u:\R^3\times (0,\, T)\rightarrow\R^3$ and
$p:\R^3\times (0,\, T)\rightarrow\R$ represent the flow velocity
vector and the scalar pressure, respectively. We study Cauchy
problem of \eqref{NNS}, which requires initial conditions
\begin{equation}\label{ini}
u(x,0)=u_0(x), \qquad x\in\R^3, \qquad {\rm div}\,u_0=0.
\end{equation}
We note that a typical type of the viscous part of the stress tensor
satisfying the property \eqref{G : property} is of some power-law
models, for example,
$G[|Du|^2]=(m_0^{\frac{2}{q-2}}+|Du|^2)^{\frac{q-2}{2}}$ with
$2<q<\infty$ or $G[|Du|^2]=m_0+(\sigma+|Du|^2)^{\frac{q-2}{2}}$ with
$1<q<\infty$, $\sigma>0$.

It is said that a fluid is Newtonian if the viscous stress tensor is
 a linear function of the rate of deformation tensor (in this case $G[|Du|^2]$ is nothing but a constant). On
the contrary, for some fluids such as blood, paint and starch, it is
observed that the relation between the shear stress and the shear
strain rate is non-linear, and we commonly call those to the
non-Newtonian fluids (see e.g. \cite{AM74}, \cite{B}, \cite{M-R05}). 
In this paper, we establish the existence of a unique regular
solution for the incompressible non-Newtonian fluids \eqref{NNS},
\eqref{G : property} and \eqref{ini}, in particular, by estimating
higher derivatives in $H^l(\R^3)$, $l\ge 3$.

We report shortly some known results related to the existence of
solutions. In the case that $G[s] = (\mu_0 +
\mu_1s)^{\frac{q-2}{2}}$ with positive constants $\mu_0$ and
$\mu_1$, M\'alek, Ne\v cas, Rokyta and R$\stackrel{\circ}{\textrm
u}$\v zi\v cka proved in \cite{M-N-R-R} that a strong solution
exists globally in time in periodic domains for $q \geq
\frac{11}{5}$ in dimension three and for $q>1$ in dimension two,
respectively (see \cite{P} for the whole space case in dimension two
or three).

Also, they established local existence of strong solution in time
for $q>\frac{5}{3}$ in three dimensional periodic domains (refer to
\cite{B-D-R10} for shear thinning case, $\frac{7}{5}<q<2$). 
Here by strong solutions we mean solutions solving
the equations a. e. and satisfying the following energy estimate:
\begin{equation*}\label{strong-class}
\sup_{0\leq t\leq T}\|u(t)\|^2_{H^{1}(\bbr^3)} +\int_0^T
\|u(t)\|^2_{H^{2}(\bbr^3)}
 + \int_0^T \int_{\bbr^3}|Du|^{q-2}|\nabla Du|^2\, \leq
 C\|u_0\|^2_{H^{1}(\bbr^3)}.
\end{equation*}

For a bounded domain $\Omega\subset \R^3$ with
no-slip boundary condition, Amamm \cite{A94} proved that if initial
data are sufficiently regular and small, the unique strong solution
$u$ exists globally in the class $u \in C((0,T),
W^{2,q}(\Omega))\cap C^1((0,T),L^q(\Omega))$, $q>3$. Moreover, the
exponential decay of $u$ in time was obtained as well. For general
initial data, Bothe and Pr\"{u}ss \cite{B-P07} established local in
time well-posedness in the class $u \in L^p((0,T),
W^{2,p}(\Omega))\cap W^{1,p}((0,T),L^q(\Omega))$, $p>n+2$ based on
$L^p$-maximal regularity theory in a bounded smooth domain
$\Omega\subset \R^n$ with no-slip or slip boundary conditions (see
other papers e.g. \cite{B09-1}, \cite{B11}, \cite{BW} and therein
reference for related to strong solutions).


%
%
%

On the other hand, weak solutions are meant to solve the equations
in the sense of distributions and satisfy
\begin{equation*}\label{strong-class}
\sup_{0\leq t\leq T}\|u(t)\|^2_{L^{2}(\bbr^3)} +\int_0^T (\|\nabla
u(t)\|^2_{L^{2}(\bbr^3)}+\|\nabla u(t)\|^q_{L^{q}(\bbr^3)})\,dt \leq
C\|u_0\|^2_{L^{2}(\bbr^3)}.
\end{equation*}

In any dimension $n\ge 2$ and $\mu_0\ge 0$, the existence of weak
solutions was firstly shown in \cite{L67,L69,Lion69} for
$\frac{3n+2}{n+2}\leq q$, and later, the result was improved up to $
\frac{2n}{n+2}<q$ in \cite{D-R-W10} (see also \cite{W} and other
related references therein).

In \cite{K-M-S02}, Kaplick\'{y}, Malek and Stara considered
 non-Newtonian fluids with a stress
tensor of the form $2U^{\prime}[|Du|^2 ]Du_{ij},$
where
$U:[0,\infty)\rightarrow[0,\infty)$ is $C^2$-function such that
\begin{equation}\label{property-S-20}
\frac{\partial^2 U[|Du|^2]}{\partial D_{ij}\partial D_{kl}}
D_{ij}D_{kl}\geq C_1 (1+|Du|^2 )^{\frac{q-2}{2}}|Du|^2,\quad
|\frac{\partial^2U[|Du|^2 ]}{\partial D_{ij}\partial D_{kl}}|\leq
C_2 (1+|Du|^2 )^{\frac{q-2}{2}}.
\end{equation}
One typical example of the stress tensor satisfying above
assumptions is $(1+|Du|^2 )^{\frac{q-2}{2}}Du$ and the corresponding
potential $U$ becomes $\frac{1}{q} (1+|Du|^2 )^{\frac{q}{2}}$
(compared to our notation, $G[|Du|^2 ]=2U'[|Du|^2 ])$.
It was shown in \cite{K-M-S02} that in case that $q>\frac{4}{3}$,
$C^{1,\alpha}$ regularity solution exists for the non-Newtonian
fluid flows satisfying \eqref{property-S-20} in two dimensional
periodic domain $\mathbb{T}^2$. More specifically, the authors deal
with the equations \eqref{NNS} and \eqref{ini} involving the stress
tensor with \eqref{property-S-20} for the case of periodic domains
$\mathbb{T}^2$, and when $q>\frac{4}{3}$, they established the
global-in-time existence of a H\"{o}lder continuous solution,
namely,
%
$$
u\in C^{1,\alpha} (\mathbb{T}^2\times(0,T)) \quad \mbox{and}\quad
p\in C^{0,\alpha} (\mathbb{T}^2\times (0,T)), \qquad 0<\alpha< 1.
$$
(see \cite{K05} for a bounded domain
$\Omega\subset \R^2$ with Dirichlet boundary condition)

In case of three dimensions, it is, however, unknown
for the existence of $C^{k,\alpha}$, $k\ge 1$, solution. The method
of proof in \cite{K-M-S02} seems to work on only two dimensions and,
as far as we know, its extension to three dimensions has
not been made so far. Our primary objective of this paper is to
construct classical solutions rather than strong solutions for the
system \eqref{NNS}-\eqref{ini} in dimension three. Our main
results are two-fold. Firstly, if initial data $u_0$ belongs to
$H^l(\R^3)$ with $l\ge 3$, we establish a local regular solution for
some time $T_l$ in the class
\[
\calX_l([0, T_{l}];\bbr^3)\,:\,= L^{\infty}([0, T_{l}];\;
H^{l}(\bbr^3))\cap L^{2}([0, T_{l}]; \; H^{l+1}(\bbr^3)),
\]
and furthermore, such solution is unique (see Theorem \ref{main}). A
consequence for local existence of regular solutions is that
$\nabla^{l-2} u$ and $\partial_t^{\frac{l-2}{2}} u$ become H\"older
continuous for an even integer $l>3$ (see Corollary \ref{main-cor}).
Secondly, we can obtain a global regular solution, provided that
initial data is sufficiently small. To be more precise, if initial
data $u_0\in H^l(\R^3)$, $l\ge 3$ such that $\|u_0\|_{H^{3}(\R^3)}$
is sufficiently small, then the local solution in Theorem \ref{main}
exists in fact globally in time (see Theorem \ref{main1}).

One of main observations is that there are two good terms caused by
energy estimates of higher derivatives, provided that the condition
\eqref{G : property} is satisfied. More specifically, in case $l\ge
3$, 
we can see that the following two integrals appear
\begin{equation}\label{April23-1000}
\int_{\R^3}G[|Du|^2]|\partial^lD u|^2\,dx,\qquad
\int_{\R^3}G^{'}[|Du|^2]|Du:
\partial^lDu|^2\,dx.
\end{equation}
It turns out that, due to the hypothesis $G[s] \geq m_0$ and $G[s]+
2 G^{'}[s]s \geq m_0$ in \eqref{G : property}, the sum of two
integrals in \eqref{April23-1000} is bounded below by
$m_0\int_{\R^3}|\partial^lD u|^2\,dx$, which plays an important role
for local existence of solutions.

Now we are ready to state our first main result.

\begin{theorem}\label{main}
Let $u_0 \in H^l(\R^3)$, $l\geq 3$. There exists
$T_{l}:=T(\|u_0\|_{H^{l}})>0$ such that the equation
\eqref{NNS}-\eqref{ini} has a unique solution $u$ in the class
$\calX_l([0, T_{l}];\bbr^3)$. Furthermore, the solution $u$
satisfies
\begin{equation}\label{Oct17-2017-10}
\sup_{0\leq t\leq T_l}\|u(t) \|^2_{H^l}+\int_0^{T_l}\|u(t)
\|^2_{H^{l+1}} <C=C(\|u_0\|_{H^{l}}).
\end{equation}
\end{theorem}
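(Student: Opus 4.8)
The plan is to obtain the solution by an approximation scheme together with uniform-in-parameter energy estimates in $H^l$, and then to recover uniqueness from the monotonicity of the stress. Concretely, I would first regularize \eqref{NNS}--\eqref{ini} --- either by a Galerkin truncation or, more conveniently for the quasilinear structure, by a linearized iteration in which $u^{n+1}$ solves the linear Stokes-type problem obtained by freezing the coefficient $G[|Du^n|^2]$ and the transport field $u^n$ at the previous step, with $u^{n+1}(0)=u_0$. Each such linear problem is uniformly parabolic (since $G\ge m_0$) and classically solvable. The entire argument then rests on showing that these approximations are bounded in $\calX_l([0,T_l];\bbr^3)$ on a time interval depending only on $\norm{u_0}_{H^l}$; the passage to the limit and \eqref{Oct17-2017-10} follow from this bound.

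The heart of the matter is the a priori $H^l$ estimate. Applying a spatial derivative $\partial^\alpha$ with $|\alpha|=l$ to \eqref{NNS}, pairing with $\partial^\alpha u$ in $L^2(\bbr^3)$ and summing over $\alpha$, the pressure term vanishes because $\mathrm{div}\,u=0$, and the top-order part $u\cdot\nabla\partial^\alpha u$ of the convective term integrates to zero for the same reason. Integrating the dissipative term by parts and keeping the terms in which all $l$ derivatives fall on a single factor produces the two integrals in \eqref{April23-1000} (with a factor $2$ on the second), namely $\int G[|Du|^2]\,|\partial^\alpha Du|^2$ and $2\int G'[|Du|^2]\,(Du:\partial^\alpha Du)^2$. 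The key pointwise observation, writing $M=\partial^\alpha Du$, $E=Du$, $s=|E|^2$, is
$$G[s]\,|M|^2+2G'[s]\,(E:M)^2\ \ge\ m_0\,|M|^2,$$
which is clear if $G'[s]\ge0$, while if $G'[s]<0$ the Cauchy--Schwarz bound $(E:M)^2\le s|M|^2$ reverses to give $G[s]|M|^2+2G'[s](E:M)^2\ge (G[s]+2G'[s]s)|M|^2\ge m_0|M|^2$, by the two hypotheses in \eqref{G : property}. Integrating and using $\int|\partial^\alpha Du|^2=\tfrac12\int|\nabla\partial^\alpha u|^2$ for divergence-free fields, this principal part dominates $\tfrac{m_0}{2}\norm{\nabla u}_{H^l}^2$, supplying the dissipation needed for the $L^2([0,T_l];H^{l+1})$ bound.

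It remains to absorb the lower-order remainders. The convective commutators are controlled by $\norm{\nabla u}_{L^\infty}\norm{u}_{H^l}^2$ (note $l\ge3$ gives $H^l\hookrightarrow W^{1,\infty}$) via Gagliardo--Nirenberg/Moser-type product inequalities. The remaining dissipative terms, in which some derivatives land on $G[|Du|^2]$, are handled by repeatedly differentiating $G[|Du|^2]$ and invoking $|G^{(k)}[s]\,s^\alpha|\le C_k|G^{(k-1)}[s]|$ from \eqref{G : property} to bound the resulting coefficients in terms of $G[|Du|^2]$ itself and derivatives of $Du$; interpolation then writes each such term as a product that Young's inequality splits into a small multiple of $\norm{\nabla u}_{H^l}^2$ (absorbed into the good term) plus a continuous increasing function of $\norm{u}_{H^l}$. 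This yields a differential inequality $\tfrac{d}{dt}\norm{u}_{H^l}^2+m_0\norm{\nabla u}_{H^l}^2\le P(\norm{u}_{H^l})$, whose comparison ODE stays finite up to a time $T_l=T(\norm{u_0}_{H^l})$, giving the $L^\infty H^l$ bound and, after integration, the $L^2H^{l+1}$ bound, i.e. \eqref{Oct17-2017-10}. Aubin--Lions compactness then extracts a convergent subsequence, and strong convergence of $Du$ is what permits passage to the limit in the nonlinear flux $G[|Du|^2]Du$.

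For uniqueness, let $u,v$ be two solutions and $w=u-v$. Subtracting the equations and pairing with $w$, the pressure and the top transport term drop, $(w\cdot\nabla)u$ contributes at most $\norm{\nabla u}_{L^\infty}\norm{w}_{L^2}^2$, and the dissipative difference is bounded below by the strong monotonicity of $M\mapsto G[|M|^2]M$: the same quadratic-form computation along the segment $N+\tau(M-N)$ yields $(G[|Du|^2]Du-G[|Dv|^2]Dv):(Du-Dv)\ge m_0|Du-Dv|^2$, so Korn and Gr\"onwall with $w(0)=0$ force $w\equiv0$. The main obstacle throughout is the quasilinear coupling: the coercivity of the principal part genuinely needs both conditions in \eqref{G : property} at once (the shear-thinning case $G'<0$ is exactly where $G+2G's\ge m_0$ is used), and the real bookkeeping lies in showing that every lower-order term created by differentiating $G[|Du|^2]$ can be closed through the structural bounds on $G^{(k)}$ without losing the highest derivative, together with securing the strong convergence required to identify the nonlinear limit.
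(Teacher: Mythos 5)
Your proposal is correct and follows essentially the same route as the paper: the same pointwise coercivity $G[s]|M|^2+2G'[s](E:M)^2\ge m_0|M|^2$ derived from the two hypotheses in \eqref{G : property}, the same use of the structural bounds on $G^{(k)}$ to close the lower-order dissipative terms, the same commutator treatment of the convection term leading to $\frac{d}{dt}X^2\le f_l(X)X^2$, Galerkin plus Aubin--Lions for existence, and strong monotonicity plus Gr\"onwall for uniqueness. No substantive differences to report.
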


A consequence of Theorem \ref{main} is the H\"older continuity of
the regular solutions until the time of existence.

\begin{corollary}\label{main-cor}
Let $l$ be an even positive integer. Under the assumption of Theorem
\ref{main}, the solution $u$ of\eqref{NNS}-\eqref{ini} belongs to
$C_x^{l-2,\frac{1}{2}}C_t^{\frac{l-2}{2},\frac{1}{4}}(\R^3\times
(0,T_l))$.
\end{corollary}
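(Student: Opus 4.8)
The plan is to derive the Hölder continuity directly from the integrability provided by Theorem~\ref{main} via standard Sobolev and parabolic embeddings, exploiting the even parity of $l$ so that $\tfrac{l-2}{2}$ is an integer and $\partial_t^{(l-2)/2}u$ is a well-defined quantity. First I would record that Theorem~\ref{main} gives $u\in \calX_l([0,T_l];\R^3)=L^\infty([0,T_l];H^l)\cap L^2([0,T_l];H^{l+1})$. Differentiating the regularity statement in space, this means $\nabla^{l-2}u(t)\in H^2(\R^3)$ uniformly in $t$, and since $H^2(\R^3)\hookrightarrow C^{0,1/2}(\R^3)$ in dimension three, each spatial slice $x\mapsto \nabla^{l-2}u(x,t)$ is Hölder-$\tfrac12$ continuous with a norm bounded uniformly in $t\in[0,T_l]$. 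This handles the spatial regularity index $C_x^{l-2,1/2}$.

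\medskip
For the temporal regularity the key step is to convert time derivatives into spatial ones using the equation \eqref{NNS} itself. The plan is to solve \eqref{NNS} for $u_t$, writing $u_t=\nabla\cdot(G[|Du|^2]Du)-(u\cdot\nabla)u-\nabla p$, and to estimate this in an appropriate Sobolev space. Because the nonlinear viscous term $\nabla\cdot(G[|Du|^2]Du)$ costs two spatial derivatives and the structural bounds \eqref{G : property} control the derivatives of $G$, one time derivative is worth roughly two spatial derivatives; iterating, $\partial_t^{(l-2)/2}u$ is controlled by spatial derivatives up to order $l-2+2=l$ of $u$, which lie in $H^1$ for a.e.\ $t$ by the $\calX_l$ bound. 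The pressure is eliminated by applying the Leray projection (equivalently, taking the divergence to solve a Poisson equation for $p$), so that $\nabla p$ inherits the same regularity as the other terms. Tracking the derivative count carefully, one obtains $\partial_t^{(l-2)/2}u\in L^\infty_{\mathrm{loc}}((0,T_l);H^2_{\mathrm{loc}})$ together with $\partial_t^{(l-2)/2+1}u\in L^2_{\mathrm{loc}}$, which by the same $H^2\hookrightarrow C^{0,1/2}$ embedding yields spatial continuity and, through a one-dimensional interpolation in time (a function whose value lies in $C^0$ and whose time derivative lies in $L^2$ is $C^{0,1/2}$ in $t$), gives $C_t^{(l-2)/2,1/4}$ after the parabolic scaling $t\sim x^2$ is accounted for, explaining the exponent $\tfrac14$.

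\medskip
The main obstacle is the bookkeeping for the time-regularity claim: one must verify that each application of $\partial_t$ truly costs only two spatial derivatives uniformly, which requires differentiating the quasilinear term $\nabla\cdot(G[|Du|^2]Du)$ repeatedly in $t$ and controlling all the product terms $G^{(k)}[|Du|^2]$ that arise. Here the hypotheses $|G^{(k)}[s]s^\alpha|\le C_k|G^{(k-1)}[s]|$ in \eqref{G : property} are essential, since they prevent the nonlinearity from amplifying derivatives and let one close the estimate with the a~priori bound \eqref{Oct17-2017-10}. I would organize this as an induction on $j=1,\dots,\tfrac{l-2}{2}$, at each stage bounding $\partial_t^{j}u$ in $H^{l-2j}$ from $\partial_t^{j-1}u\in H^{l-2(j-1)}$ using the equation and the algebra property $H^s(\R^3)$ is a Banach algebra for $s>3/2$. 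The parabolic scaling argument fixing the Hölder exponent $\tfrac14$ in time, rather than $\tfrac12$, should also be stated explicitly, as it reflects that one time unit corresponds to two spatial units in the natural diffusive scaling of \eqref{NNS}.
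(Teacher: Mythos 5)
Your overall architecture is the same as the paper's: use Theorem \ref{main} to get $u\in L^\infty(0,T_l;H^l)$, run an induction that trades each $\partial_t$ for spatial derivatives through the equation (recovering the pressure from the Poisson equation obtained by taking the divergence of \eqref{NNS}), conclude $\partial_x^{l-2m}\partial_t^b u\in L^\infty(0,T_l;L^2)$ for $0\le b\le m\le \frac l2$, and get the spatial H\"older regularity from $H^2(\R^3)\hookrightarrow C^{0,\frac12}(\R^3)$. The control of the repeatedly differentiated quasilinear term via the structural hypotheses \eqref{G : property} is exactly the content of the paper's Lemma \ref{deriv-G : time}, so that part of your plan is sound.

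The gap is in the final step, where you pass from $v:=\partial_x^{l-2(m+1)}\partial_t^{b}u\in L^\infty(0,T_l;C^{\frac12})$ and $\partial_tv\in L^\infty(0,T_l;L^2)$ to pointwise H\"older continuity in time. Your parenthetical principle (``a function whose value lies in $C^0$ and whose time derivative lies in $L^2$ is $C^{0,1/2}$ in $t$'') is not the relevant statement here: the time derivative is controlled only in $L^2$ \emph{in space}, uniformly in time, so the fundamental theorem of calculus gives Lipschitz continuity of $t\mapsto v(\cdot,t)$ only in the $L^2_x$ norm, which says nothing about a fixed point $x$. The paper's Lemma \ref{holder-lem} supplies the missing mechanism: average $v$ over a ball of radius $\rho$, so that $|v_\rho(x,t_1)-v_\rho(x,t_2)|\le C\rho^{-3/2}\|v_t\|_{L^\infty_tL^2_x}|t_1-t_2|$ by Cauchy--Schwarz, while $|v-v_\rho|\le C\rho^{1/2}$ by the spatial H\"older bound, and then optimize $\rho=|t_1-t_2|^{1/\mu}$ with $\mu=\alpha+\frac32$. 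The exponent $\frac14=\frac{\alpha}{\alpha+3/2}$ at $\alpha=\frac12$ comes from this balance, not from the heuristic parabolic scaling $t\sim x^2$; and the argument genuinely needs $\partial_t^{l/2}u\in L^\infty_tL^2_x$ (which your induction does deliver at the last stage), whereas the ``$L^2_{\mathrm{loc}}$'' in time you state for the extra time derivative would only yield the weaker exponent $\frac18$. Once this interpolation lemma is inserted and the uniform-in-time bound on the top time derivative is made explicit, your proof coincides with the paper's.
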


Another main result is the global existence of regular solutions, in
case that initial data are sufficiently small. More precisely, our
second result reads as follows:

\begin{theorem}\label{main1}
Let $u_0 \in H^l(\R^3)$, $l\geq 3$. There exists $\epsilon_0>0$ such
that if $\|u_0\|_{H^{3}(\R^3)}<\epsilon$ for any $\epsilon$ with
$0<\epsilon \leq \epsilon_0$, then the unique regular solution $u$
in Theorem \ref{main} is extended globally in time, i.e.
$T_l=\infty$.
\end{theorem}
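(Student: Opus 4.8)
The plan is to promote the local solution of Theorem \ref{main} to a global one through a continuation argument driven by a single a priori differential inequality, the crucial feature of which is that all nonlinear contributions are controlled by the \emph{low} norm $\|u\|_{H^3}$ alone. Concretely, I would revisit the energy estimates already underlying Theorem \ref{main}: apply $\partial^\alpha$ to \eqref{NNS} for $0\le|\alpha|\le l$, pair with $\partial^\alpha u$ in $L^2$, and sum. The pressure drops out after integrating against the divergence-free field, and the top-order piece of the viscous term produces exactly the two integrals in \eqref{April23-1000}, whose sum is bounded below by $m_0\|\partial^l Du\|_{L^2}^2$. Since $\partial^\alpha u$ is again divergence-free, the identity $\|\partial^\alpha Du\|_{L^2}^2=\tfrac12\|\nabla\partial^\alpha u\|_{L^2}^2$ converts this lower bound into genuine dissipation, so that summing over all orders yields a coercive term $c_0\,\|\nabla u\|_{H^l}^2$ with $c_0\sim m_0$.

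The goal is then to show that every remaining term --- the convective commutators $[\partial^\alpha,u\cdot\nabla]u$ and the commutators arising when derivatives fall on $G[|Du|^2]$ --- is bounded by $C\|u\|_{H^3}\,\|\nabla u\|_{H^l}^2$, giving
\begin{equation*}
\frac{d}{dt}\|u\|_{H^l}^2 + c_0\,\|\nabla u\|_{H^l}^2 \le C\|u\|_{H^3}\,\|\nabla u\|_{H^l}^2 .
\end{equation*}
For the convective part this follows from Moser-type commutator estimates together with $\|\nabla u\|_{L^\infty}\lesssim\|u\|_{H^3}$ via $H^2(\R^3)\hookrightarrow L^\infty$. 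For the viscous part the decisive tool is \eqref{G : property}: iterating $|G^{(k)}[s]|\le C_k|G^{(k-1)}[s]|$ shows every derivative of $G$ is pointwise dominated by $G[s]$ itself, while the weighted bound $|G^{(k)}[s]s|\le C_k|G^{(k-1)}[s]|$ absorbs the extra powers of $|Du|^2$ produced by the chain rule; in each such term one places the single highest-order factor in $L^2$ (feeding the dissipation) and all lower-order factors in $L^\infty$, each controlled by $\|u\|_{H^3}$.

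With this inequality in hand I would first treat $l=3$. Choosing $\epsilon_0$ small enough that $2C\epsilon_0\le c_0$, a standard continuity argument shows that as long as $\|u_0\|_{H^3}<\epsilon_0$ the quantity $\|u(t)\|_{H^3}$ can never reach $2\epsilon_0$: on any interval where $\|u(t)\|_{H^3}\le 2\epsilon_0$ the dissipation dominates the right-hand side, so $\frac{d}{dt}\|u\|_{H^3}^2\le 0$ and $\|u(t)\|_{H^3}$ is in fact non-increasing. This uniform-in-time bound, combined with the fact that the local existence time $T_l=T(\|u_0\|_{H^l})$ in Theorem \ref{main} depends only on the $H^l$ norm, precludes finite-time blow-up and yields $T_3=\infty$. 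For $l>3$ the right-hand side of the same inequality is then bounded by $2C\epsilon_0\,\|\nabla u\|_{H^l}^2\le c_0\|\nabla u\|_{H^l}^2$, since the global smallness of $\|u\|_{H^3}$ is already established; hence $\|u(t)\|_{H^l}$ stays bounded for all time and $T_l=\infty$.

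I expect the main obstacle to be the viscous commutators: one must verify that the chain-rule expansion of $\partial^\alpha\big(G[|Du|^2]Du\big)$ never forces two high-order derivatives into $L^2$ simultaneously, and that all spurious powers of $|Du|^2$ are genuinely reabsorbed by the weighted bounds in \eqref{G : property}, so that the entire nonlinear contribution is truly proportional to the small quantity $\|u\|_{H^3}$ rather than to $\|u\|_{H^l}$.
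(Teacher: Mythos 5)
Your proposal follows essentially the same route as the paper: re-run the $H^l$ energy estimates so that every nonlinear term is quadratic in the top-order dissipation with a prefactor controlled by $\|u\|_{H^3}$ alone (this is exactly what estimate \eqref{eq-3 : derive G} of Lemma \ref{deriv-G} delivers), then close with a continuation argument at the $H^3$ level and propagate the resulting smallness to all $l>3$. The only caveat is that the intermediate viscous factors cannot all be placed in $L^\infty$ under $\|u\|_{H^3}$-control (e.g.\ $\nabla^{l-1}Du$ is not); the correct distribution is the Gagliardo--Nirenberg interpolation built into Lemma \ref{deriv-G}, which yields precisely the prefactor $\|Du\|_{L^\infty}+\|Du\|_{L^\infty}^{\alpha}$ your argument requires.
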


We emphasize that global wellposedness of the regular
solution in Theorem \ref{main1} requires only smallness of
$H_3$-norm of initial data, not demanding control of the size of
$\|u_0\|_{H^{l}(\R^3)}$, $l>3$, from which a direct consequence is
the following:
\begin{corollary}\label{main1-cor}
Let $u_0 \in C^\infty(\R^3)$ be satisfying
$\|u_0\|_{H^{l}(\R^3)}<\infty$ for any $l\geq 3$. If
$\|u_0\|_{H^{3}(\R^3)}<\epsilon_0$, where $\epsilon_0$ is given in
Theorem \ref{main1}, then the unique smooth solution $u$ of
\eqref{NNS}-\eqref{ini} exists  globally in time and $u$ satisfies
the estimate \eqref{Oct17-2017-10}.
\end{corollary}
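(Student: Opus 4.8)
The plan is to deduce the corollary directly from Theorem \ref{main1} applied at every regularity level $l\geq 3$, exploiting the fact emphasized just above that the smallness threshold $\epsilon_0$ furnished by Theorem \ref{main1} is governed solely by $\|u_0\|_{H^3(\R^3)}$ and is therefore \emph{independent of} $l$. Since $u_0\in C^\infty(\R^3)$ has $\|u_0\|_{H^l(\R^3)}<\infty$ for every $l\geq 3$, for each such $l$ the hypotheses of Theorem \ref{main1} are met, and because $\|u_0\|_{H^3(\R^3)}<\epsilon_0$ we obtain a global regular solution $u^{(l)}\in\calX_l([0,\infty);\bbr^3)$, i.e.\ with $T_l=\infty$, satisfying the estimate \eqref{Oct17-2017-10} on every finite interval.

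First I would check that these solutions at different levels coincide. Fix $3\leq l<l'$. On any finite interval $[0,T]$, the embedding $H^{l'}(\R^3)\hookrightarrow H^{l}(\R^3)$ gives $\calX_{l'}([0,T];\bbr^3)\subset\calX_l([0,T];\bbr^3)$, so $u^{(l')}$ is also an element of the class in which Theorem \ref{main} asserts uniqueness at level $l$; hence $u^{(l')}=u^{(l)}$ on $[0,T]$. Letting $T\to\infty$ shows that all the $u^{(l)}$ are restrictions of a single global field $u$, which is then defined on $\R^3\times(0,\infty)$ and belongs to $\calX_l([0,\infty);\bbr^3)$ for every $l\geq 3$, with the uniform bound \eqref{Oct17-2017-10} holding at each level. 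Uniqueness of $u$ among smooth solutions is immediate, since any two such solutions already lie in $\calX_3$ and must agree by Theorem \ref{main}.

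Next I would upgrade these Sobolev bounds to classical smoothness. Spatial regularity is immediate: as $u\in L^\infty([0,\infty);H^l(\R^3))$ for every $l$, the embedding $H^l(\R^3)\hookrightarrow C^k(\R^3)$, valid once $l>k+\tfrac{3}{2}$, gives $u(\cdot,t)\in C^\infty(\R^3)$ with derivatives bounded uniformly in $t$. To obtain regularity in time I would bootstrap through the equation \eqref{NNS}: solving for the time derivative writes $\partial_t u=\nabla\cdot\big(G[|Du|^2]Du\big)-(u\cdot\nabla)u-\nabla p$ as a nonlinear combination of spatial derivatives of $u$ and $p$, all controlled in the Sobolev scale; the pressure is recovered by taking the divergence of the momentum equation, yielding $-\Delta p=\nabla\cdot\big((u\cdot\nabla)u\big)-\nabla\cdot\nabla\cdot\big(G[|Du|^2]Du\big)$, whose right-hand side is smooth in space, so $p$ inherits spatial smoothness. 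Differentiating the equation repeatedly and feeding the spatial bounds back in yields $\partial_t^k u\in L^\infty_{\mathrm{loc}}((0,\infty);H^m(\R^3))$ for all $k,m$, whence $u\in C^\infty(\R^3\times(0,\infty))$.

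I expect the only genuine subtlety to be the $l$-independence of $\epsilon_0$, which is precisely what must be (and is) guaranteed by Theorem \ref{main1}: were the threshold to shrink as $l$ grew, one could not run the argument simultaneously at all levels. Given that uniformity, the remaining steps---compatibility of the solutions across levels via uniqueness and the nesting of the spaces, together with the bootstrap for time regularity---are routine, and the estimate \eqref{Oct17-2017-10} for the final smooth solution is inherited level-by-level from Theorem \ref{main} combined with $T_l=\infty$ from Theorem \ref{main1}.
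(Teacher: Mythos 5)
Your proposal is correct and follows exactly the route the paper intends: the paper states the corollary as a direct consequence of the fact, emphasized just before its statement, that the threshold $\epsilon_0$ in Theorem \ref{main1} depends only on $\|u_0\|_{H^3}$ and not on $l$, so that Theorem \ref{main1} can be invoked at every level $l\geq 3$ simultaneously. Your additional details (identifying the solutions across levels via the uniqueness in Theorem \ref{main} and upgrading the Sobolev bounds to classical smoothness by Sobolev embedding and a bootstrap through the equation) are exactly the routine steps the authors leave implicit.
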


This paper is structured as follows: In Section 2, we state a key
lemma, whose proof is given in the Appendix. In Section 3, the proof
of Theorem \ref{main} is presented. Section 4 is devoted to proving
Corollary \ref{main-cor}. In Section 5, we provide the proof of
Theorem \ref{main1}.

\section{Preliminaries}

We introduce some notations. For $1\leq q\leq \infty$, we denote by
$W^{k,q}(\bbr^3)$ the standard Sobolev space.
 Let $(X,\|\cdot\|_{X})$ be a normed space and by $L^q(0,T;X)$ we mean the space of all
Bochner measurable functions $\varphi:(0,T)\rightarrow X$ such that
\begin{equation*}
\left\{
\begin{array}{ll}
\displaystyle\|\varphi\|_{L^q(0,T;X)}:=\Big(\int_0^T \|\varphi(t)\|^q_{X}dt\Big)^{\frac{1}{q}}<\infty \hspace{1.5cm}\mbox{if} \quad 1\leq q<\infty,\\
\vspace{-3mm}\\
\|\varphi\|_{L^{\infty}(0,T;X)}:=\displaystyle\mbox{\rm ess\,sup}_{t\in (0,T)}\|\varphi(t)\|_{X}<\infty \quad\quad\ \mbox{if} \quad  q=\infty.\\
\vspace{-3mm}
\end{array}\right.
\end{equation*}
We denote by $C^{\alpha}_xC^{\alpha/2}_t$ (or $C^{\alpha}_{x,t}$)
the space of H\"older continuous functions with an exponent
$\alpha\in (0, 1)$. For a non-negative integer $k$ we mean, in
general, by $C^{2k, \alpha}_xC^{k, \alpha/2}_t$ (or $C^{2k,
\alpha}_{x,t}$) the space of functions whose mixed derivative
$\nabla^{2(i-j)}_x\nabla^j_t u$ belongs to
$C^{\alpha}_xC^{\alpha/2}_t$ for all integers $i,j$ with $0\le j\le
i\le k$. Let $a_{ij}$ and $b_{ij}$ with $i,j=1,2,3$ be scalar
functions, and for $3\times 3$ matrices $A=(a_{ij})_{i,j=1}^3$ and
$B=(b_{ij})_{i,j=1}^3$ we write
$$ A : B = \sum_{i,j=1}^3 a_{ij}b_{ij},\qquad \nabla A : \nabla B =\sum_{i,j=1}^3 \nabla a_{ij} \cdot \nabla b_{ij},
\qquad \nabla^2 A : \nabla^2 B = \sum_{i,j=1}^3 \nabla^2 a_{ij} :
\nabla^2 b_{ij}. $$ The letter $C$ is used to represent a generic
constant, which may change from line to line.

Next lemma is a key observation for our analysis, which shows some
estimates of higher derivatives for the viscous part of the stress
tensor.

\begin{lemma}\label{deriv-G}
Let $l$ be a positive integer, $\tilde{\sigma}_l :\{1,2,\cdots,
l\}\rightarrow \{1,2,\cdots, l \}$ a permutation of $\{1,2,\cdots, l
\}$, and $\pi_l$ a mapping from $\{1,2,\cdots, l\}$ to $\{1,2,3\}$.
Suppose that $u\in C^{\infty}(\R^3)\cap H^l(\R^3)$. Assume further
that $G : [0,\infty) \rightarrow [0,\infty)$ is infinitely
differentiable and satisfies properties given in \eqref{G :
property}. Then, the multi-derivative of $G$ can be rewritten as the
following decomposition:
\begin{equation*}\label{eq-1 : derive G}
\partial_{x_{\sigma_l(l)}}\partial_{x_{\sigma_l(l-1)}}
\cdots\partial_{x_{\sigma_l(1)}}G[|Du|^2] = 2\big (G'[|Du|^2] Du:
\partial_{x_{\sigma_l(l)}}\partial_{x_{\sigma_l(l-1)}}
\cdots\partial_{x_{\sigma_l(1)}}D u \big) + E_l,
\end{equation*}
where $\sigma_l:=\pi_l\circ\tilde{\sigma}_l$ and
\begin{equation*}\label{eq-2 : derive G}
E_l=2\big(\partial_{x_{\sigma(l)}} (G^{'}[|Du|^2]Du):
\partial^{l-1} Du \big) +
\partial_{x_{\sigma(l)}} E_{l-1},\qquad \quad E_1=0,
\end{equation*}
where
$\partial^{l-1}:=\partial_{x_{\sigma(l-1)}}\cdots\partial_{x_{\sigma(1)}}$.
Furthermore, we obtain the following.

\noindent $(1).$ $E_2$ and $E_3$ satisfy
\begin{equation}\label{eq-5 : derive G}
\begin{aligned}
|E_2| &\leq CG[|Du|^2]|\nabla Du|^2, \\
|E_3| &\leq CG[|Du|^2]\big(|\nabla Du|^3 +|\nabla^2 Du||\nabla Du| \big). \\
\end{aligned}
\end{equation}

\noindent $(2).$ For $1\leq \alpha \leq l$
\[
\| \partial^{\alpha}G[|Du|^2]~ \partial^{l-\alpha}
Du\|_{L^2}+\|E_\alpha
\partial^{l-\alpha}Du \|_{L^2}
\]
\begin{equation}\label{eq-3 : derive G}
\leq C \|G[ |Du|^2 ]\|_{L^\infty} \big (\| Du\|_{L^\infty} + \|
Du\|_{L^\infty}^\alpha \big)\|\nabla^l Du\|_{L^2}.
\end{equation}

\noindent $(3).$ In case that $l\geq 4$, there exists $\beta$ with
$0<\beta \leq l$ such that the following is satisfied:
\begin{itemize}
\item[(i)] If $\alpha\leq l-1$, then
\begin{equation}\label{eq-4 : derive G}
\begin{aligned}
\| \partial^{\alpha}G[|Du|^2]~ \partial^{l-\alpha} Du\|_{L^2} \leq C \|G[ |Du|^2 ]\|_{L^\infty} \| Du\|_{L^\infty}^{\beta}\|\nabla^{l-1}
Du\|_{L^2}^{\frac{2l-3}{2l-5}}.
\end{aligned}
\end{equation}
\item[(ii)] If $\alpha\leq l$, then
\begin{equation}\label{eq-4 : derive G-v1}
\begin{aligned}
\|E_\alpha
\partial^{l-\alpha}Du \|_{L^2} \leq C \|G[ |Du|^2 ]\|_{L^\infty} \| Du\|_{L^\infty}^{\beta}\|\nabla^{l-1}
Du\|_{L^2}^{\frac{2l-3}{2l-5}}.
\end{aligned}
\end{equation}
\end{itemize}
\end{lemma}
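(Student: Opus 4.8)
The plan is to establish the decomposition formula first by induction on $l$, then derive the three quantitative estimates by combining this structural formula with the growth hypotheses in \eqref{G : property} and standard Gagliardo–Nirenberg interpolation.

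For the decomposition itself, I would proceed by induction. The base case $l=1$ is a direct computation: applying one spatial derivative to $G[|Du|^2]$ via the chain rule yields $\partial_{x_{\sigma(1)}}G[|Du|^2]=2G'[|Du|^2]\,Du:\partial_{x_{\sigma(1)}}Du$, so the leading term is exactly as claimed and $E_1=0$. For the inductive step, I would assume the formula holds for $l-1$ and differentiate once more in $x_{\sigma(l)}$. The derivative of the leading term $2(G'[|Du|^2]Du:\partial^{l-1}Du)$ splits by the product rule into one piece where the outermost derivative lands on $\partial^{l-1}Du$ (producing the new leading term $2(G'[|Du|^2]Du:\partial^l Du)$) and one piece where it lands on the coefficient $G'[|Du|^2]Du$ (producing the first summand in the recursion for $E_l$); differentiating $E_{l-1}$ supplies the term $\partial_{x_{\sigma(l)}}E_{l-1}$. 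This yields exactly the stated recursion. The explicit bounds \eqref{eq-5 : derive G} for $E_2$ and $E_3$ then follow by writing out $E_2=2(\partial_{x_{\sigma(2)}}(G'[|Du|^2]Du):\partial_{x_{\sigma(1)}}Du)$ and $E_3$ from the recursion, expanding all derivatives by the chain rule, and invoking the structural hypothesis $|G^{(k)}[s]s^\alpha|\le C_k|G^{(k-1)}[s]|$ repeatedly to convert every factor $G^{(k)}[|Du|^2]|Du|^{2j}$ into a constant multiple of $G[|Du|^2]$ times an appropriate power of $|Du|$; the remaining factors are the spatial derivatives of $Du$, which assemble into the stated products of $|\nabla Du|$ and $|\nabla^2 Du|$.

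For part $(2)$, the estimate \eqref{eq-3 : derive G}, the key observation is that each term in $\partial^\alpha G[|Du|^2]$ and in $E_\alpha$, after applying the hypothesis on $G^{(k)}$, takes the form of $G[|Du|^2]$ times a product of at most $\alpha$ factors of derivatives $\partial^{\beta_i}Du$ whose orders $\beta_i$ sum to $\alpha$, with a power of $|Du|$ absorbing the lower-order terms. I would pull out $\|G[|Du|^2]\|_{L^\infty}$ and one copy of the highest-order factor in $L^2$, bound the remaining lower-order factors in $L^\infty$ via Gagliardo–Nirenberg interpolation between $\|Du\|_{L^\infty}$ and $\|\nabla^l Du\|_{L^2}$, and collect the resulting $L^\infty$-norms of $Du$ into the factor $(\|Du\|_{L^\infty}+\|Du\|_{L^\infty}^\alpha)$. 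The bookkeeping of exponents, ensuring the interpolation always closes at the single top-order norm $\|\nabla^l Du\|_{L^2}$, is where care is needed.

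Part $(3)$ refines this for $l\ge 4$ by trading the top-order norm $\|\nabla^l Du\|_{L^2}$ for a superlinear power of the lower norm $\|\nabla^{l-1}Du\|_{L^2}^{(2l-3)/(2l-5)}$; this is the step I expect to be the main obstacle. When $\alpha\le l-1$ (so that the differential order of $G$ is strictly less than $l$), all factors $\partial^{\beta_i}Du$ appearing have order at most $l-1$, and I would apply Gagliardo–Nirenberg to each factor interpolating between $\|Du\|_{L^\infty}$ and $\|\nabla^{l-1}Du\|_{L^2}$ rather than $\|\nabla^l Du\|_{L^2}$; the precise exponent $\frac{2l-3}{2l-5}$ should emerge from summing the interpolation weights over the factors subject to the constraint $\sum\beta_i=l$ in three space dimensions, with the surplus absorbed into $\|Du\|_{L^\infty}^\beta$ for a suitable $\beta\le l$. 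The delicate point is verifying that the exponent on the low-order norm never exceeds the critical value that would break a later Grönwall-type closure, and that the same bound holds for $E_\alpha$ through $\alpha\le l$ by exploiting that the recursion for $E_\alpha$ never introduces a factor of differential order exceeding $l-1$ in the relevant range.
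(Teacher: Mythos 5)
Your overall strategy coincides with the paper's: the decomposition is obtained by exactly the induction you describe, the bounds on $E_2$, $E_3$ follow by expanding the recursion and using $|G^{(k)}[s]s^\alpha|\le C_k|G^{(k-1)}[s]|$ repeatedly, and the paper formalizes your "product of derivative factors whose orders sum to $\alpha$" as the identity $E_n\simeq P_n(G,Du)R_n(Du)$, where $P_n$ collects the $G^{(k)}(Du)^j$ prefactors (bounded by $CG[|Du|^2]$) and $R_n$ the products $(\partial^{a_1}Du)^{i_1}\cdots(\partial^{a_k}Du)^{i_k}$ with $\sum a_ji_j=n$. Parts (2) and (3) are then H\"older plus Gagliardo--Nirenberg, and your explanation of where the exponent $\tfrac{2l-3}{2l-5}$ comes from in part (3) — interpolating every factor against $\|\nabla^{l-1}Du\|_{L^2}$ instead of $\|\nabla^{l}Du\|_{L^2}$ and summing the weights — is precisely the paper's computation.

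The one step in your outline that would fail as literally stated is the H\"older splitting in part (2): "one copy of the highest-order factor in $L^2$, the remaining factors in $L^\infty$." In three dimensions the Gagliardo--Nirenberg exponent for $\|\partial^{\gamma}Du\|_{L^2}$ between $\|Du\|_{L^\infty}$ and $\|\nabla^{l}Du\|_{L^2}$ is $\theta=\tfrac{2\gamma-3}{2l-3}$, which is negative when $\gamma=1$; so for a term such as $(\partial Du)^{\alpha}\,\partial^{l-\alpha}Du$ with $\alpha=l-1$ or $\alpha=l$ (where every factor is a first derivative) your splitting produces an inadmissible interpolation, even though the total weight still formally sums to $1$. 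The paper avoids this by not privileging any single factor: it places $(\partial^{a_j}Du)^{i_j}$ in $L^{p_ji_j}$ and $\partial^{l-\alpha}Du$ in $L^{q}$ with $\sum 1/p_j+1/q=1/2$, chooses each $\theta_j$ from the scaling relation $\tfrac{1}{p_ji_j}=\tfrac{a_j}{3}+(\tfrac12-\tfrac{l}{3})\theta_j$ so that every exponent lies in $(0,1)$, and then verifies $\sum\theta_ji_j+\theta_q=1$ (respectively $\tfrac{2l-3}{2l-5}$ in part (3)). This is exactly the "bookkeeping" you flagged; it needs the distributed Lebesgue exponents, not an $L^2\times L^\infty\times\cdots\times L^\infty$ splitting. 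Your remark that the recursion for $E_\alpha$ never produces a factor of order exceeding $\alpha-1\le l-1$, which is why (ii) holds up to $\alpha=l$ while (i) requires $\alpha\le l-1$, is correct and is the same observation the paper uses.
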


The proof of Lemma \ref{deriv-G} will be given at the Appendix,
since it is a bit lengthy.

\smallskip
Next, we estimate the difference of the viscous part of the stress
tensor, which is useful for uniqueness of regular solutions.
Although it seems elementary, we give the details for clarity.

\begin{lemma}\label{r-400}
Let $v,w \in W^{1,2}(\bbr^3)$. Under the assumptions on $G$ given in
\eqref{G : property}, we have
\begin{equation*}\label{eq 2 : r-400}
m_0\| Dv-Dw\|^2_{L^2(\bbr^3)}\leq
\int_{\bbr^3}\Big(G[|Dv|^2]Dv-G[|Dw|^2]Dw\Big): (Dv-Dw)\,dx,
\end{equation*}
where $m_0$ is a positive constant in \eqref{G : property}.
\end{lemma}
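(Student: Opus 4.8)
The plan is to reduce the stated integral inequality to a pointwise algebraic inequality for the stress function $S(A):=G[|A|^2]A$ acting on symmetric $3\times3$ matrices, namely
$$\bigl(S(A)-S(B)\bigr):(A-B)\ \geq\ m_0\,|A-B|^2\qquad\text{for all symmetric }A,B,$$
and then integrate it over $\R^3$ with $A=Dv(x)$ and $B=Dw(x)$. Since the hypotheses \eqref{G : property} presuppose $G\in C^1$, the map $t\mapsto S(B+t(A-B))$ is $C^1$ on $[0,1]$, so I can represent the left-hand side by the fundamental theorem of calculus along the segment joining $B$ to $A$.

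Concretely, setting $A_t:=B+t(A-B)$ and differentiating $S(A_t)=G[|A_t|^2]A_t$, while using $\frac{d}{dt}|A_t|^2=2\,A_t:(A-B)$, I obtain
$$\bigl(S(A)-S(B)\bigr):(A-B)=\int_0^1\Bigl[\,G[|A_t|^2]\,|A-B|^2+2G'[|A_t|^2]\bigl(A_t:(A-B)\bigr)^2\Bigr]dt.$$
It then suffices to bound the bracketed integrand below by $m_0|A-B|^2$ for each $t$, after which integration in $t$ and then over $x\in\R^3$ yields the claim.

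The core step is this pointwise bound, which I would obtain by a sign analysis of $G'$. If $G'[|A_t|^2]\geq 0$, the second term is nonnegative and the first is at least $m_0|A-B|^2$ by $G\geq m_0$. If $G'[|A_t|^2]<0$, I apply Cauchy--Schwarz in the form $\bigl(A_t:(A-B)\bigr)^2\leq|A_t|^2\,|A-B|^2$; multiplying through by the negative factor $2G'[|A_t|^2]$ reverses the inequality, so the integrand is at least $\bigl(G[|A_t|^2]+2G'[|A_t|^2]|A_t|^2\bigr)|A-B|^2$, which is $\geq m_0|A-B|^2$ by the second hypothesis in \eqref{G : property} evaluated at $s=|A_t|^2$. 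In both cases the integrand dominates $m_0|A-B|^2$.

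The only delicate point is exactly this sign-splitting: the structural condition $G[s]+2G'[s]s\geq m_0$ is what rescues the estimate precisely in the shear-thinning regime where $G$ decreases, since there Cauchy--Schwarz is applied in the direction that loses control and the positivity of the first term alone does not suffice. I would also remark that no regularity beyond $W^{1,2}$ is needed, because the final inequality is purely pointwise and is merely integrated, with the measurability of $Dv,Dw$ ensuring the integral is well defined.
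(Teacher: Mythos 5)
Your proposal is correct and follows essentially the same route as the paper: both write the difference via the fundamental theorem of calculus along the segment $\theta Dv+(1-\theta)Dw$, arrive at the same integrand $G[|A_t|^2]|A-B|^2+2G'[|A_t|^2](A_t:(A-B))^2$, and establish the pointwise lower bound $m_0|A-B|^2$ by the identical sign-splitting on $G'$ combined with Cauchy--Schwarz and the hypothesis $G[s]+2G'[s]s\geq m_0$. No substantive difference from the paper's argument.
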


\begin{proof}
We note that
\[
\int_{\R^3}\Big(G[|D v|^2]D v -G[|D w|^2]D w\Big): (D v-D w).
\]
\[
=\int_{\R^3} \Big(\int_0^1\frac{d}{d\theta}\Big(G \big[\big|\theta D
v +(1-\theta)D w\big|^2 \big]\big(\theta D v +(1-\theta)D
w\big)\Big)\,d\theta\Big) :(Dv-Dw)
\]
\[
=2\int_{\R^3}\Big ( \int_0^1 G^{\prime}\big[\big|\theta D v
+(1-\theta)D w\big|^2 \big]\big(\theta D v +(1-\theta)D w : Dv-Dw
\big) \big(\theta D v +(1-\theta)D w \big)\,d\theta \Big ):(D v-D w)
\]
\[
+\int_{\R^3} \Big(\int_0^1G \big[\big|\theta D v +(1-\theta)D
w\big|^2 \big](D v-D w)\,d\theta \Big):(D v-D w)
\]
\[
=2\int_{\R^3} \int_0^1 G^{\prime}\big[\big|\theta D v +(1-\theta)D
w\big|^2 \big]\big(\theta D v +(1-\theta)D w : Dv-Dw
\big)^2\,d\theta
\]
\[
+\int_{\R^3}\int_0^1 G \big[\big|\theta D v +(1-\theta)D
w\big|^2]\,d\theta \big|D v-D w|^2.
\]
\begin{equation}\label{eq-4 : G}
\geq m_0\int_{\R^3}|D v-D w|^2\,d\theta \,dx.
\end{equation}
Due to the properties in \eqref{G : property} for $G$, namely
$G[s]\geq m_0$ and $G[s] + 2G^{\prime}[s] s\geq m_0$ ~for any
$s\in[0,\infty) $, we deduce the inequality \eqref{eq-4 : G}.
Indeed, for any $3\times 3$ matrices $A$ and $B$, we have
\begin{equation}\label{eq-1 : G}
G[|A|^2]|B|^2 + 2 G^{\prime}[|A|^2](A:B)^2 \geq m_0 |B|^2.
\end{equation}
Since, if $G^{\prime}[|A|^2] \geq 0$ then
\begin{equation}\label{eq-2 : G}
G[|A|^2]|B|^2 + 2 G^{\prime}[|A|^2](A:B)^2 \geq  G[|A|^2]|B|^2 \geq m_0|B|^2.
\end{equation}
In case that $G^{\prime}[|A|^2] <0$, we note that
\begin{equation}\label{eq-3 : G}
G[|A|^2]|B|^2 + 2 G^{\prime}[|A|^2](A:B)^2 \ge \big(G[|A|^2] + 2
G^{\prime}[|A|^2]|A|^2  \big)|B|^2\geq m_0 |B|^2.
\end{equation}
We combine \eqref{eq-2 : G} and \eqref{eq-3 : G} to conclude
\eqref{eq-1 : G}. We exploit \eqref{eq-1 : G} with $A=\theta D v
+(1-\theta)D w$ and $B=D v-D w$ to get \eqref{eq-4 : G}. This
completes the proof.
\end{proof}

\section{Proof of Theorems \ref{main}}

In this section, we prove the existence of a local solution to the
equation \eqref{NNS}--\eqref{ini}. We first obtain a priori
estimates and we then justify the estimates by using Galerkin
method.

\subsection{A priori estimate}

We suppose that $u$ is regular. We then compute certain a priori
estimates.

\noindent$\bullet$ ($\|u \|_{L^2}$-estimate)\, We  multiply $u$ to
\eqref{NNS} and integrate it by parts to get
\begin{equation}\label{eq-1 : H0}
\frac{1}{2}\frac{d}{dt}||u||^2_{L^2(\R^3)}+\int_{\R^3}G[|Du|^2]
|Du|^2\,dx=0.
\end{equation}
\noindent$\bullet$ ($\|\nabla u \|_{L^2}$-estimate)\, Taking
derivative $\partial_{x_i}$ to \eqref{NNS} and multiplying
$\partial_{x_i}u$,
\begin{equation*}\label{eq-1 : H1}
\frac{1}{2}\frac{d}{dt}||\partial_{x_i}u||^2_{L^2(\R^3)}+\int_{\R^3}\partial_{x_i}
(G[|Du|^2] Du): \partial_{x_i} Du\,dx =- \int_{\R^3}\partial_{x_i}
\big((u\cdot \nabla) u \big )\cdot \partial_{x_i} u\,dx.
\end{equation*}
Noting that
\begin{equation*}\label{eq-2 : H1}
\begin{aligned}
\partial_{x_i}(G[|Du|^2] Du): \partial_{x_i} Du&=\big[ \partial_{x_i}G[|Du|^2] Du + G[|Du|^2]\partial_{x_i} Du \big] :  \partial_{x_i} Du\\
&= 2 G^{'}[|Du|^2](D u : \partial_{x_i}Du)( Du: \partial_{x_i} Du) + G[|Du|^2]|\partial_{x_i} Du|^2\\
&= 2 G^{'}[|Du|^2]| Du: \partial_{x_i} Du|^2 +
G[|Du|^2]|\partial_{x_i} Du|^2,
\end{aligned}
\end{equation*}
we have
\begin{equation}\label{eq-4 : H1}
\frac{1}{2}\frac{d}{dt}||\partial_{x_i}u||^2_{L^2(\R^3)}+\int_{\R^3}G[|Du|^2]|\partial_{x_i}
Du|^2\,dx + \int_{\R^3}2G^{'}[|Du|^2]| Du: \partial_{x_i} Du|^2\,dx
\end{equation}
\[
= -\int_{\R^3}\partial_{x_i} \big((u\cdot \nabla) u \big )\cdot
\partial_{x_i} u\,dx.
\]
Using $A=Du$ and $B=\partial_{x_i} D u$, we apply the inequality
\eqref{eq-4 : G} to \eqref{eq-4 : H1}, and get
\begin{equation}\label{eq-3 : H1}
\frac{1}{2}\frac{d}{dt}||\partial_{x_i}u||^2_{L^2(\R^3)}+\int_{\R^3}m_0 |\partial_{x_i} Du|^2\,dx
\leq -\int_{\R^3}\partial_{x_i} \big((u\cdot \nabla) u \big )\cdot
\partial_{x_i} u\,dx.
\end{equation}
We will treat the term in righthand side caused by convection
together later.

\noindent$\bullet$ ($\|\nabla^2 u \|_{L^2}$-estimate)\, Taking the
derivative $\partial_{x_{j}}\partial_{x_{i}}$ on \eqref{NNS} and
multiplying it by $\partial_{x_{j}}\partial_{x_{i}}u $,
\begin{equation}\label{eq-2 : H2}
\frac{1}{2}\frac{d}{dt}||\partial_{x_{j}}\partial_{x_{i}}u||^2_{L^2(\R^3)}+\int_{\R^3}\partial_{x_{j}}\partial_{x_{i}}\Big[G[|Du|^2]
D u\Big]: \partial_{x_{j}}\partial_{x_{i}}D u\,dx
\end{equation}
\[
= -\int_{\R^3}\partial_{x_{j}}\partial_{x_{i}}\big((u\cdot\nabla u)
\big)\cdot \partial_{x_{j}}\partial_{x_{i}} u\,dx.
\]
We observe that
\begin{equation}\label{eq-3 : H2}
\begin{aligned}
&\int_{\R^3}\partial_{x_{j}}\partial_{x_{i}}\Big[G[|Du|^2] D u\Big]:\partial_{x_{j}}\partial_{x_{i}}D u\,dx\\
&=\int_{\R^3}G[|Du|^2]|\partial_{x_{j}}\partial_{x_{i}}D u|^2\,
+\sum_{\sigma}\int_{\R^3}\partial_{x_{\sigma(i)}}G[|Du|^2] (\partial_{x_{\sigma(j)}}D u : \partial_{x_{j}}\partial_{x_{i}}D u)\,dx\\
&\hspace{2cm}+\int_{\R^3}\partial_{x_{j}}\partial_{x_{i}}G[|Du|^2](D
u :
\partial_{x_{j}}\partial_{x_{i}}D u)\,dx=:I_{21}+I_{22}+I_{23},
\end{aligned}
\end{equation}
where $\sigma :\{ i,j\}\rightarrow \{i,j \}$ is a permutation of
$\{i,j \}$. We separately estimate terms $I_{22}$ and $I_{23}$ in
\eqref{eq-3 : H2}. Using H\"{o}lder, Young's and Gagliardo-Nirenberg
inequalities, we have for $I_{22}$
\[
|I_{22}|=\abs{\int_{\R^3}  2G^{'}[|Du|^2](Du:
\partial_{x_{\sigma(i)}}D u)(\partial_{x_{\sigma(j)}}D u :
\partial_{x_{j}}\partial_{x_{i}}Du)\,dx}
\]
\[
\leq  C\|G[|Du|^2]\|_{L^\infty}\|\nabla Du\|^2_{L^4}\|\nabla^2
Du\|_{L^2}
\]
\begin{equation*}\label{eq-5 : H2}
\leq   C\|G[|Du|^2]\|_{L^\infty}\|Du\|_{L^\infty}\|\nabla^2
Du\|^2_{L^2},
\end{equation*}
where we used the condition \eqref{G : property}.

\noindent  For $I_{23}$, using Lemma \ref{deriv-G}, we compute
\begin{equation*}
\begin{aligned}
I_{23}&= \int_{\R^3}2\big( G^{'}[|Du|^2](Du : \partial_{x_{j}}\partial_{x_{i}}Du) + E_2 \big) (D u : \partial_{x_{j}}\partial_{x_{i}}Du)\,dx\\
& =\int_{\R^3}E_2(D u : \partial_{x_{j}}\partial_{x_{i}}Du)\,dx
+ 2\int_{\R^3}G^{'}[|Du|^2]  |Du: \partial_{x_{j}}\partial_{x_{i}}Du|^2\,dx\\
&:=I_{231}+I_{232}.
\end{aligned}
\end{equation*}
The term $I_{231}$ is estimated as
\begin{equation}\label{eq-8 : H2}
\begin{aligned}
\abs{ I_{231}}&\leq C\|G[|Du|^2]\|_{L^\infty} \|Du\|_{L^\infty}\|\nabla Du\|^2_{L^4}\|\nabla^2 Du\|_{L^2}\\
 &\leq  C\|G[|Du|^2]\|_{L^\infty} \|Du\|_{L^\infty}^2\|\nabla^2
 Du\|_{L^2}^2,
\end{aligned}
\end{equation}%
where we used the first inequality of \eqref{eq-5 : derive G}. We
combine estimates \eqref{eq-2 : H2}-\eqref{eq-8 : H2} to get
\[
\frac{1}{2}\frac{d}{dt}||\partial_{x_{j}}\partial_{x_{i}}u||^2_{L^2(\R^3)}+\int_{\R^3}G[|Du|^2]|\partial_{x_{j}}\partial_{x_{i}}D
u|^2 + \int_{\R^3}2G^{'}[|Du|^2]|Du:
\partial_{x_{j}}\partial_{x_{i}}Du|^2
\]
\begin{equation}\label{eq-4 : H2}
\leq C\|G[|Du|^2]\|_{L^\infty}(\|Du\|_{L^\infty} +
\|Du\|_{L^\infty}^2)\|\nabla^2 Du\|^2_{L^2}
-\int_{\R^3}\partial_{x_{j}}\partial_{x_{i}}\big((u\cdot\nabla u)
\big)\cdot \partial_{x_{j}}\partial_{x_{i}} u.
\end{equation}
Similarly as in \eqref{eq-3 : H1}, we have
\[
\frac{1}{2}\frac{d}{dt}||\partial_{x_{j}}\partial_{x_{i}}u||^2_{L^2(\R^3)}+\int_{\R^3}m_0|\partial_{x_{j}}\partial_{x_{i}}D
u|^2
\]
\begin{equation}\label{eq-1 : H2}
\leq C\|G[|Du|^2]\|_{L^\infty}(\|Du\|_{L^\infty} +
\|Du\|_{L^\infty}^2)\|\nabla^2 Du\|^2_{L^2}
-\int_{\R^3}\partial_{x_{j}}\partial_{x_{i}}\big((u\cdot\nabla u)
\big)\cdot \partial_{x_{j}}\partial_{x_{i}} u.
\end{equation}
\noindent$\bullet$ ($\|\nabla^3 u \|_{L^2}$-estimate) For
convenience, we denote
$\partial^3:=\partial_{x_{k}}\partial_{x_{j}}\partial_{x_{i}}$.
Similarly as before, taking the derivative $\partial^3$ on
\eqref{NNS} and multiplying it by $\partial^3 u$,
\begin{equation}\label{eq-1 : H3}
\frac{1}{2}\frac{d}{dt}||\partial^3u||^2_{L^2(\R^3)}
+\int_{\R^3}\partial^3\Big[G[|Du|^2] D u\Big] :
\partial^3D u\,dx=-\int_{\R^3}\partial^3\big((u\cdot\nabla u) \big)\cdot
\partial^3 u\,dx.
\end{equation}
Direct computations show that
\[
\int_{\R^3}\partial^3\Big[G[|Du|^2] D u\Big] : \partial^3D
u\,dx=\int_{\R^3}G[|Du|^2]|\partial^3D u|^2\,dx
\]
\[
+\sum_{\sigma_3}\int_{\R^3}\partial_{x_{\sigma_3(i)}}G[|Du|^2](\partial_{x_{\sigma_3(k)}}\partial_{x_{\sigma_3(j)}}D
u:\partial^3D u)\,dx
\]
\[
+\sum_{\sigma_3}\int_{\R^3}\partial_{x_{\sigma_3(j)}}\partial_{x_{\sigma_3(i)}}G[|Du|^2](\partial_{x_{\sigma_3(k)}}D
u : \partial^3Du)\,dx
\]
\begin{equation}\label{eq-2 : H3}
+\int_{\R^3}\partial^3G[|Du|^2](D u:\partial^3D
u)\,dx=I_{31}+I_{32}+I_{33}+I_{34},
\end{equation}
where $\sigma_3=\pi_3\circ \tilde{\sigma}_3$ such that
$\tilde{\sigma}_3:\{i,j,k\}\rightarrow \{i,j,k\}$ is a permutation
of $\{i,j,k \}$ and $\pi_3$ is a mapping from $\{i,j,k\}$ to
$\{1,2,3\}$.

We separately estimate terms $I_{32}$, $I_{33}$ and $I_{34}$. We
note first that
\begin{equation}\label{eq-11 : H3}
\begin{aligned}
|I_{32}|&\leq \int_{\R^3}|2(G^{'}[|Du|^2]|
|Du||\partial_{x_{\sigma_3(i)}}Du||\partial_{x_{\sigma_3(k)}}\partial_{x_{\sigma_3(j)}}
D u|
|\partial^3 D u|\,dx\\
&\leq C\|G[|Du|^2]\|_{L^\infty}\|\nabla Du\|_{L^6}\|\nabla^2 Du\|_{L^3}\|\nabla^3 Du\|_{L^2}\\
&\leq C\|G[|Du|^2]\|_{L^\infty}\|\nabla^2 Du\|_{L^2} \| Du\|_{L^\infty}^{\frac{1}{3}} \|\nabla^3 Du\|^{\frac{2}{3}}_{L^2}\|\nabla^3Du\|_{L^2}\\
&\leq C\|G[|Du|^2]\|^6_{L^\infty}\| Du\|_{L^\infty}^2 \|\nabla^2
Du\|^{6}_{L^2}+\epsilon\|\nabla^3 Du\|^{2}_{L^2}.
\end{aligned}
\end{equation}
For $I_{33}$, we have
\[
|I_{33}|=\abs{\int_{\R^3}(
2G^{'}[|Du|^2]Du:\partial_{x_{\sigma_3(j)}}
\partial_{x_{\sigma_3(i)}} D u
+ E_2  )(\partial_{x_{\sigma_3(k)}}D u : \partial^3Du)\,dx}
\]
\[
\leq \int_{\R^3}2(G^{'}[|Du|^2]|Du||\nabla^2 D u| + G[|Du|^2]|\nabla
D u|^2  )|\nabla D u| |\nabla^3 Du|\,dx
\]
\[
\leq  C \|G^{'}[|Du|^2]Du\|_{L^\infty} \|\nabla^2 Du\|_{L^3}\|\nabla
Du\|_{L^6}\|\nabla^3 Du\|_{L^2} +\|G[|Du|^2]\|_{L^\infty}\|\nabla
Du\|^3_{L^6}\|\nabla^3 Du\|_{L^2}
\]
\[
\leq C\|G[|Du|^2]\|^6_{L^\infty}\|Du \|_{L^\infty}^2\|\nabla^2
Du\|^{6}_{L^2}
+C\|G[|Du|^2]\|^2_{L^\infty}\|\nabla^2Du\|^6_{L^2}+2\epsilon\|\nabla^3
Du\|^{2}_{L^2}
\]
\begin{equation}\label{eq-8 : H3}
\leq C(\|G[|Du|^2]\|^6_{L^\infty}\|Du
\|_{L^\infty}^2+\|G(|Du|)\|^2_{L^\infty})\|\nabla^2
Du\|^{6}_{L^2}+2\epsilon\|\nabla^3 Du\|^{2}_{L^2},
\end{equation}
where we use same argument as \eqref{eq-11 : H3} in the fourth
inequality. Finally, for $I_{34}$, using Lemma \ref{deriv-G}, we
note that
\begin{equation}\label{eq-5 : H3}
\begin{aligned}
I_{34}=&\int_{\R^3}\big( 2G^{'}[|Du|^2]Du:\partial^3 D u) + E_3\big)(D u:\partial^3D u)\,dx\\
=&2\int_{\R^3} G^{'}[|Du|^2]|Du:\partial^3 D u|^2\,dx + \int_{\R^3}
E_3(D u:\partial^3D u)\,dx.
\end{aligned}
\end{equation}
The second term in \eqref{eq-5 : H3} is estimated as follows:
\[
\int_{\R^3}E_3(D u:\partial^3D u)\,dx \leq  \int_{\R^3}|E_3||D
u||\nabla^3 D u|\,dx
\]
\[
\leq C\int_{\R^3} G[|Du|^2]\big(|\nabla Du|^3 +|\nabla^2 Du||\nabla
Du| \big)|D u||\nabla^3 D u|\,dx
\]
\[
\leq C\|G[|Du|^2]\|_{L^\infty}\|Du\|_{L^\infty} \big( \|\nabla
Du\|_{L^6}^3 + \|\nabla Du\|_{L^6}\|\nabla^2 Du\|_{L^3} \big
)\|\nabla^3 D u\|_{L^2}
\]
\[
\leq C\|G[|Du|^2]\|^2_{L^\infty}\|Du\|^2_{L^\infty}\|\nabla^2
Du\|^6_{L^2}+C\|G[|Du|^2]\|^6_{L^\infty}\|Du\|^4_{L^\infty}\|\nabla^2
Du\|^{6}_{L^2}+2\epsilon\|\nabla^3 Du\|^{2}_{L^2},
\]
\begin{equation}\label{eq-7 : H3}
\leq
C(\|G[|Du|^2]\|^2_{L^\infty}\|Du\|^2_{L^\infty}+\|G[|Du|^2]\|^6_{L^\infty}\|Du\|^4_{L^\infty})\|\nabla^2
Du\|^{6}_{L^2}+2\epsilon\|\nabla^3 Du\|^{2}_{L^2},
\end{equation}
where we use same argument as \eqref{eq-8 : H3} in the third
inequality. Adding up the estimates \eqref{eq-1 : H3}-\eqref{eq-7 :
H3}, we obtain
\[
\frac{d}{dt}||\partial^3u||^2_{L^2(\R^3)}
+\int_{\R^3}G[|Du|^2]|\partial^3D u|^2\,dx
+\int_{\R^3}2G^{'}[|Du|^2]|Du:
\partial^3Du|^2\,dx
\]
\[
\leq C(\|G[|Du|^2]\|^2_{L^\infty}+  \|G[|Du|^2]\|^6_{L^\infty}
)(\|Du\|^2_{L^\infty}+\|Du\|^4_{L^\infty})\|\nabla^2
Du\|^{6}_{L^2}+5\epsilon\|\nabla^3 Du\|^{2}_{L^2}
\]
\begin{equation}\label{eq-13 : H3}
-\int_{\R^3}\partial^3\big((u\cdot\nabla u) \big)\cdot
\partial^3u\,dx.
\end{equation}
Hence, we have
\[
\frac{d}{dt}||\partial^3u||^2_{L^2(\R^3)}
+\int_{\R^3}m_0|\partial^3D u|^2\,dx
\]
\[
\leq C(\|G[|Du|^2]\|^2_{L^\infty}+  \|G[|Du|^2]\|^6_{L^\infty}
)(\|Du\|^2_{L^\infty}+\|Du\|^4_{L^\infty})\|\nabla^2
Du\|^{6}_{L^2}+5\epsilon\|\nabla^3 Du\|^{2}_{L^2}
\]
\begin{equation}\label{eq-9 : H3}
-\int_{\R^3}\partial^3\big((u\cdot\nabla u) \big)\cdot
\partial^3u\,dx.
\end{equation}
Next, we estimate the terms caused by convection terms in
\eqref{eq-3 : H1}, \eqref{eq-1 : H2} and \eqref{eq-9 : H3}.
\begin{align}\label{s-2-2}
\begin{aligned}
\sum_{1\leq |\alpha|\leq 3}\int_{\R^3} \partial^{\alpha}[(u\cdot
\nabla)u]\cdot \partial^{\alpha}u\,dx
&=\sum_{1\leq |\alpha|\leq 3}\int_{\R^3} [\partial^{\alpha}((u\cdot \nabla)u)-u\cdot \nabla \partial^{\alpha}u]\partial^{\alpha}u\,dx\\
&\leq \sum_{1\leq |\alpha|\leq3}\|\partial^{\alpha}((u\cdot \nabla)u)-u\cdot \nabla \partial^{\alpha}u\|_{L^2}\|\partial^{\alpha} u\|_{L^{2}}\\
&\leq \sum_{1\leq |\alpha|\leq3} \|\nabla u\|_{L^\infty} \|u\|_{H^{3}}\|\partial^{\alpha} u\|_{L^{2}}\\
&\leq C\|\nabla u\|_{L^\infty}\| u\|^2_{H^{3}},
\end{aligned}
\end{align}
where we use the following inequality:
\begin{equation}\label{est-comm}
\sum_{|\alpha|\leq m}\int_{\R^3} \|\nabla^{\alpha}(
fg)-(\nabla^{\alpha}f)g\|_{L^2}\leq C(\|f\|_{H^{m-1}}\|\nabla
g\|_{L^{\infty}}+\|f\|_{L^{\infty}}\|g\|_{H^m}).
\end{equation}
We combine \eqref{eq-1 : H0}, \eqref{eq-3 : H1}, \eqref{eq-1 : H2}
and \eqref{eq-9 : H3} with \eqref{s-2-2} to conclude
\begin{equation}\label{final-est-H3-local}
\begin{aligned}
&\frac{d}{dt}||u||^2_{H^3(\R^3)} +\int_{\R^3}(m_0-5\epsilon) (|\nabla^3 Du|^2+|\nabla^2 Du|^2 + |\nabla Du|^2 + | Du|^2)\,dx\\
& \leq C\|\nabla u\|_{L^\infty}\| u\|^2_{H^{3}}
+C\|G[|Du|^2]\|_{L^\infty}(\|Du\|^2_{L^\infty}+\|Du\|_{L^\infty})\|\nabla^2
Du\|^2_{L^2}\\
&+C(\|G[|Du|^2]\|^2_{L^\infty}+\|G[|Du|^2]\|^6_{L^\infty})(\|Du\|^2_{L^\infty}+\|Du\|^4_{L^\infty})\|\nabla^2
Du\|^{6}_{L^2}.
\end{aligned}
\end{equation}
Here we choose a sufficiently small $\epsilon>0$ such that
$m_0-5\epsilon>0$. Furthermore, we have
\begin{equation}\label{eq-2 : H3 local}
\|G[|Du|^2]\|_{L^\infty} \leq \max_{0\leq s\leq
\|Du|\|_{L^\infty}}G[s] \leq \max_{0\leq s\leq
C\|u|\|_{H^3}}G[s]:=g(\|u\|_{H^3}),
\end{equation}
where $g:[0,\infty) \mapsto [0,\infty)$ is a nondecreasing function.
We set $X(t) := \|u(t)\|_{H^3(\R^3)}$ and it then follows from
\eqref{final-est-H3-local} and \eqref{eq-2 : H3 local} that
\begin{equation}\label{eq-1 : H3 local}
\frac{d}{dt} X^2\leq f_3(X)X^2
\end{equation}
for some nondecreasing continuous function $f_3$, which immediately
implies that there exists $T_3>0$ such that
$$ \sup_{0\leq t\leq T_3}X(t) < \infty.$$

\noindent $\bullet$ ($\|\nabla^4 u \|_{L^2}$-estimate) For
simplicity, we denote
$\partial^4:=\partial_{x_{l}}\partial_{x_{k}}\partial_{x_{j}}\partial_{x_{i}}$.
Similarly as in \eqref{eq-1 : H3} and \eqref{eq-2 : H3}, we have
\begin{equation}\label{eq-1 : H4}
\frac{1}{2}\frac{d}{dt}||\partial^4u||^2_{L^2(\R^3)}
+\int_{\R^3}\partial^4\Big[G[|Du|^2] D u\Big] : \partial^4 D u\,dx
=-\int_{\R^3}\partial^4\big((u\cdot\nabla u) \big)\cdot
\partial^4 u\,dx.
\end{equation}
We note that
\begin{equation}\label{eq-2 : H4}
\begin{aligned}
&\int_{\R^3}\partial^4\Big[G[|Du|^2] D u\Big] :\partial^4 D u\,dx
=\int_{\R^3}G[|Du|^2]|\partial^4D u|^2\,dx\\
&\hspace{0.5cm}+\sum_{\sigma_4}\int_{\R^3}\partial_{x_{\sigma_4(i)}}G[|Du|^2](\partial_{x_{\sigma_4(l)}}\partial_{x_{\sigma_4(k)}}\partial_{x_{\sigma_4(j)}}D
u :
\partial^4D u)\,dx\\
&\hspace{0.5cm}+\sum_{\sigma_4}\int_{\R^3}\partial_{x_{\sigma_4(j)}}\partial_{x_{\sigma_4(i)}}G[|Du|^2](\partial_{x_{\sigma_4(l)}}\partial_{x_{\sigma_4(k)}}D
u
:\partial^4Du)\,dx\\
&\hspace{0.5cm}+\sum_{\sigma_4}\int_{\R^3}\partial_{x_{\sigma_4(k)}}\partial_{x_{\sigma_4(j)}}\partial_{x_{\sigma_4(i)}}G[|Du|^2](\partial_{x_{l}}D
u
:\partial^4D u)\,dx\\
&\hspace{0.5cm}+\int_{\R^3}\partial^4G[|Du|^2] (D u:\partial^4D
u)\,dx:= I_{41}+I_{42}+I_{43}+I_{44}+I_{45},
\end{aligned}
\end{equation}
where $\sigma_4=\pi_4\circ \tilde{\sigma}_4$ such that
$\tilde{\sigma}_4:\{i,j,k,l\}\rightarrow \{i,j,k,l\}$ is a
permutation of $\{i,j,k,l \}$ and $\pi_4$ is a mapping from
$\{i,j,k,l\}$ to $\{1,2,3\}$.

We first estimate $I_{42}$. Exploiting \eqref{eq-4 : derive G} with
$l=4,~\alpha=1$, we get
\begin{equation*}\label{eq-10 : H4}
\begin{aligned}
|I_{42}|&\leq C \| \partial G[|Du|^2] ~ \partial^3 Du\|_{L^2} \| \partial^4 Du\|_{L^2}\\
&\leq C \|G[ |Du|^2 ]\|_{L^\infty} \|Du \|_{L^\infty}^{\beta_1} \|\partial^3 Du \|_{L^2}^{5/3}\| \partial^4 Du\|_{L^2}\\
&\leq C \|G[ |Du|^2 ]\|_{L^\infty}^2 \|Du \|_{L^\infty}^{2\beta_1}
\|\partial^3 Du \|_{L^2}^{10/3}+ \epsilon\|\nabla^4 Du\|_{L^2}^2
\end{aligned}
\end{equation*}
for some $0<\beta_1\leq 4$. Similarly, using \eqref{eq-4 : derive G}
with $l=4,~\alpha=2$, we have
\begin{equation*}
\begin{aligned}
|I_{43}|&\leq C \| \partial^2 G[|Du|^2] ~ \partial^2 Du\|_{L^2} \| \partial^4 Du\|_{L^2}\\
&\leq C \|G[ |Du|^2 ]\|_{L^\infty} \|Du \|_{L^\infty}^{\beta_2} \|\partial^3 Du \|_{L^2}^{5/3}\| \partial^4 Du\|_{L^2}\\
&\leq C \|G[ |Du|^2 ]\|_{L^\infty}^2 \|Du \|_{L^\infty}^{2\beta_2}
\|\partial^3 Du \|_{L^2}^{10/3}+ \epsilon\|\nabla^4 Du\|_{L^2}^2
\end{aligned}
\end{equation*}
for some $0<\beta_2\leq 4$. Again, due \eqref{eq-4 : derive G} with
$l=4,~\alpha=3$, we obtain for some $0<\beta_3\leq 4$
\begin{equation*}
\begin{aligned}
|I_{44}|&\leq C \| \partial^3 G[|Du|^2] ~ \partial Du\|_{L^2} \| \partial^4 Du\|_{L^2}\\
&\leq C \|G[ |Du|^2 ]\|_{L^\infty} \|Du \|_{L^\infty}^{\beta_3} \|\partial^3 Du \|_{L^2}^{5/3}\| \partial^4 Du\|_{L^2}\\
&\leq C \|G[ |Du|^2 ]\|_{L^\infty}^2 \|Du \|_{L^\infty}^{2\beta_3}
\|\partial^3 Du \|_{L^2}^{10/3}+ \epsilon\|\nabla^4 Du\|_{L^2}^2.
\end{aligned}
\end{equation*}
For the term $I_{45}$, we note that
\begin{equation}\label{eq-7 : H4}
\begin{aligned}
I_{45}
&= \int_{\R^3}\big[2G^{'}[|Du|^2] (Du:
\partial^4D u )+ E_4\big]
(D u :\partial^4D u)\,dx\\
&= \int_{\R^3}2G^{'}[|Du|^2] |Du: \partial^4D u|^2\,dx
+\int_{\R^3}E_4 (D u :\partial^4D u)\,dx.
\end{aligned}
\end{equation}
Owing to \eqref{eq-4 : derive G-v1}, we see that
\begin{equation}\label{eq-8 : H4}
\begin{aligned}
\int_{\R^3}E_4(D u :\partial^4D u)\,dx
&\leq C \|G[ |Du|^2 ]\|_{L^\infty} \|Du \|_{L^\infty}^{\beta_4} \|\partial^3 Du \|_{L^2}^{5/3}\| \partial^4 Du\|_{L^2}\\
&\leq C \|G[ |Du|^2 ]\|_{L^\infty}^2 \|Du \|_{L^\infty}^{2\beta_4}
\|\partial^3 Du \|_{L^2}^{10/3}+ \epsilon\|\nabla^4 Du\|_{L^2}^2
\end{aligned}
\end{equation}
for some $0<\beta_4\leq 4$. We combine \eqref{eq-1 : H4}-\eqref{eq-8 : H4} to have
\[
\frac{1}{2}\frac{d}{dt}||\partial^4u||^2_{L^2(\R^3)}
+\int_{\R^3}G[|Du|^2]|\partial^4D u|^2\,dx+
\int_{\R^3}2G^{'}[|Du|^2]|Du:\partial^4Du|^2\,dx
\]
\[
\leq C \|G[ |Du|^2 ]\|_{L^\infty}^2 (1+\|Du
\|_{L^\infty})^{2\beta}\|\nabla^3 Du \|_{L^2}^{10/3}
\]
\begin{equation}\label{eq-11 : H4}
+ 4\epsilon \|\nabla^4
Du\|^2_{L^2}-\int_{\R^3}\partial^4\big((u\cdot\nabla u) \big) \cdot
\partial^4 u\,dx,
\end{equation}
where $\beta=\beta_1+\beta_2+\beta_3+\beta_4$.
Hence, as before, we
have
\[
\frac{1}{2}\frac{d}{dt}||\partial^4u||^2_{L^2(\R^3)}
+\int_{\R^3}m_0|\partial^4D u|^2\,dx
\]
\[
\leq C \|G[ |Du|^2 ]\|_{L^\infty}^2 (1+\|Du
\|_{L^\infty})^{2\beta}\|\nabla^3 Du \|_{L^2}^{10/3}
\]
\begin{equation}\label{eq-9 : H4}
 + 4\epsilon
\|\nabla^4 Du\|^2_{L^2}-\int_{\R^3}\partial^4\big((u\cdot\nabla u)
\big) \cdot
\partial^4 u\,dx.
\end{equation}
Using \eqref{est-comm}, we estimate the convection term
\begin{align}\label{s-3}
\begin{aligned}
\sum_{|\alpha|= 4}\int_{\bbr^3} \partial^{\alpha}[(u\cdot
\nabla)u]\cdot \partial^{\alpha}u\,dx
&=\sum_{|\alpha|= 4}\int_{\R^3} [\partial^{\alpha}((u\cdot \nabla)u)-u\cdot \nabla \partial^{\alpha}u]\partial^{\alpha}u\,dx\\
&\leq \sum_{|\alpha|= 4}\|\partial^{\alpha}((u\cdot \nabla)u)-u\cdot \nabla \partial^{\alpha}u\|_{L^2}\|\partial^{\alpha} u\|_{L^{2}}\\
&\leq C \sum_{|\alpha|= 4} \|\nabla u\|_{L^\infty} \|u\|_{H^{4}}\|\partial^{\alpha} u\|_{L^{2}}\\
&\leq C \|\nabla u\|_{L^\infty} \|u\|^2_{H^{4}}.
\end{aligned}
\end{align}
Finally, we combine \eqref{eq-1 : H0}, \eqref{eq-3 : H1},
\eqref{eq-1 : H2}, \eqref{eq-9 : H3} and \eqref{eq-9 : H4} with
\eqref{s-3} to conclude
\begin{equation*}
\begin{aligned}
&\frac{1}{2}\frac{d}{dt}||u||^2_{H^4(\R^3)} +\int_{\R^3} \big (m_0-9\epsilon\big)(|\nabla^4 Du|^2+|\nabla^3 Du|^2+|\nabla^2 Du|^2 + |\nabla Du|^2
+ | Du|^2)\,dx\\
& \leq C\big( \|G[ |Du|^2 ]\|_{L^\infty}^2+  \|G[ |Du|^2
]\|_{L^\infty}^6\big( \|D u\|_{L^\infty}+\|D u\|^2_{L^\infty}+\|D
u\|^4_{L^\infty} + (1+\|D u\|_{L^\infty})^{2\beta}\big)
\end{aligned}
\end{equation*}
\[
\times \big (\|\nabla^3 Du\|^{10/3}_{L^2} + \|\nabla^2 Du\|^{6}_{L^2} + \|\nabla^2 Du\|^{2}_{L^2}+ \|\nabla Du\|_{L^2}^2\big )\\
+ C\|\nabla u\|_{L^\infty}\| u\|_{H^{4}}^2.
\]
Let us denote $X(t) := \|u(t)\|_{H^4(\R^3)}$. Similarly as in
\eqref{eq-1 : H3 local}, we have
\[
\frac{d}{dt} X^2\leq f_4(X)X^2.
\]
for some non-decreasing continuous function $f_4$. Therefore, there
exists $T_4>0$ such that
\[
\sup_{0\leq t\leq T_4}X(t) < \infty.
\]
So far, we have proven for some non-decreasing continuous function
$f_k$, $k=3,4$
\begin{equation}\label{final-est-Hl-local}
\frac{1}{2}\frac{d}{dt}\| u\|^2_{H^k} +\int_{\R^3}\big
(m_0-\epsilon\big)(|\nabla^k Du|^2+\cdots+ |\nabla Du|^2 + |
Du|^2)\,dx \leq f_k(\| u\|_{H^k}) \| u\|^2_{H^k}.
\end{equation}

Next, we will show that \eqref{final-est-Hl-local} holds for general
$k\geq 3$ by the induction argument. Suppose
\eqref{final-est-Hl-local} is true for $k=l-1$ for some $l\geq 4$.
We then prove that \eqref{final-est-Hl-local} is true for $k=l$.

Indeed, let $\sigma_l=\pi_l\circ \tilde{\sigma}_l$ such that
$\tilde{\sigma}_l:\{1,2,\cdots,l\}\rightarrow \{1,2,\cdots,l\}$ is a
permutation of $\{1,2,\cdots,l \}$ and $\pi_l$ is a mapping from
$\{1,2,\cdots,l\}$ to $\{1,2,3\}$. For simplicity, we denote
$\partial^l:=\partial_{x_{\sigma(l)}}\partial_{x_{\sigma(l-1)}}\cdots\partial_{x_{\sigma(1)}}$.
Similar computations as before leads to
\[
\frac{1}{2}\frac{d}{dt}||\partial^l u||^2_{L^2(\R^3)}
+\int_{\R^3}G[|Du|^2]|\partial^l D u|^2\,dx +\int
2G^{\prime}[|Du|^2]|Du:\partial^l Du|^2\,dx
\]
\[
\leq \sum_{|\alpha|=1}^{l-1}\int_{\R^3}|\partial^\alpha G[|Du|^2]|
|\partial^{l-\alpha} D u| |\partial^lD u| \,dx
 + \int_{\R^3}|E_{l}||D u||\partial^l D u|\,dx
\]
\begin{equation}\label{eq-6 : I-l1}
-\int_{\R^3}\partial^l\big(u\cdot\nabla u\big) \cdot\partial^l
u\,dx:=  I_{l1}+ I_{l2} + I_{l3}.
\end{equation}
We exploit \eqref{eq-4 : derive G} for $ I_{l1}$ to get
\begin{equation}\label{eq-4 : I-l1}
\begin{aligned}
| I_{l1}| &\leq \sum_{\alpha=1}^{l-1}\|\partial^\alpha G[|Du|^2]~\nabla^{l-\alpha} D u\|_{L^2} \|\partial^l D u\|_{L^2} \\
&\leq C \|G[ |Du|^2
]\|_{L^\infty}\sum_{\alpha=1}^{l-1}\|Du\|_{L^\infty}^{\beta_5}
\|\nabla^{l-1} D u\|_{L^2}^{\frac{2l-3}{2l-5}}
\|\nabla^l D u\|_{L^2}\\
&\leq f_{l1}(\| u\|_{H^{l-1}})\|
u\|_{H^{l}}^{\frac{2l-3}{2l-5}}\|\nabla^l D u\|_{L^2}
\end{aligned}
\end{equation}
for some function $f_{l1}$. For $I_{l2}$, due to \eqref{eq-4 :
derive G-v1}, we obtain
\begin{equation}\label{eq-5 : I-l1}
\begin{aligned}
|I_{l2}|&\leq \|E_l D u\|_{L^2} \|\partial^l D u\|_{L^2} \\
&\leq C \|G[ |Du|^2 ]\|_{L^\infty} \|Du\|_{L^\infty}^{\beta_6} \|\nabla^{l-1} D u\|_{L^2}^{\frac{2l-3}{2l-5}} \|\nabla^l D u\|_{L^2}\\
&\leq f_{l2}(\| u\|_{H^{l-1}})\|
u\|_{H^{l}}^{\frac{2l-3}{2l-5}}\|\nabla^l D u\|_{L^2}
\end{aligned}
\end{equation}
for some function $f_{l2}$. Lastly, we estimate $I_{l3}$ similarly
as before.
\begin{align}\label{s-2-higher}
\begin{aligned}
&\sum_{1\leq |\alpha|\leq l}\int_{\R^3} \partial^{\alpha}[(u\cdot
\nabla)u]\cdot \partial^{\alpha}u\,dx
=\sum_{1\leq |\alpha|\leq l}\int_{\R^3} [\partial^{\alpha}((u\cdot \nabla)u)-u\cdot \nabla \partial^{\alpha}u]\partial^{\alpha}u\,dx\\
&\qquad\leq \sum_{1\leq |\alpha|\leq l} \|\nabla u\|_{L^\infty}
\|u\|_{H^l}\|\partial^{\alpha} u\|_{L^{2}} \leq C\|\nabla
u\|_{L^\infty} \|u\|^2_{H^{l}}.
\end{aligned}
\end{align}
Combining \eqref{eq-6 : I-l1}, \eqref{eq-4 : I-l1}, \eqref{eq-5 :
I-l1} and \eqref{s-2-higher}, we obtain
\[
\frac{1}{2}\frac{d}{dt}||\nabla^l u||^2_{L^2(\R^3)} +\int_{\R^3} m_0
|\nabla^l D u|^2\,dx
\]
\[
\leq f_l(\|u\|_{H^{l-1}})
\|u\|_{H^{l}}^{\frac{2l-3}{2l-5}}\|\nabla^l Du\|_{L^2} + C\|\nabla
u\|^2_{L^\infty}\|u\|^2_{H^{l}}
\]
\[
\leq f_l(\|u\|_{H^{l-1}}) \|u\|_{H^{l}}^{^{\frac{2(2l-3)}{2l-5}}} +
C\|\nabla u\|^2_{L^\infty}\|u\|^2_{H^{l}}+ \epsilon \|\nabla^l Du
\|_{L^2}^2
\]
for some nondecreasing continuous function $f_l$. Hence, we have
\begin{equation*}\label{eq-1 : Hl local}
\frac{1}{2}\frac{d}{dt}||\nabla^l u||^2_{L^2(\R^3)}+\int_{\R^3}\big(
m_0-\epsilon \big)|\nabla^l D u|^2\,dx\leq f_l(\|u \|_{H^{l}})\|
u\|^2_{H^{l}}.
\end{equation*}
Since \eqref{final-est-Hl-local} is true for $k=l-1$, we conclude
that
\begin{equation*}
\frac{1}{2}\frac{d}{dt}|| u||^2_{H^l}+\int_{\R^3}\big(m_0-\epsilon
\big)\big(|\nabla^l D u|^2+ \cdots |\nabla D u|^2 + | D
u|^2\big)\leq f_l(\|u \|_{H^{l}}) \|u\|^2_{H^{l}}.
\end{equation*}
Choosing $\epsilon>0$ so small and letting $X(t):=\|u(t) \|_{H^l}$,
we obtain
\begin{equation*}
\frac{d}{dt}X^2  \leq  f_l(X) X^2,
\end{equation*}
which yields that there exists $T_l>0$ such that
$$ \sup_{0\leq t\leq T_l}\|u(t) \|_{H^l} <\infty. $$
We complete the a priori estimates. \qed

\begin{remark}
We note that $\partial_t{u}\in L^2((0,T_l);L^2(\R^3))$. Indeed, we
introduce the antiderivative of $G$, denoted by $\tilde{G}$, i.e.,
$\tilde{G}[s]=\int^s_0 G[\tau]d\tau$. Multiplying $\partial_t u$ to
\eqref{NNS}, integrating it by parts, and using H\"{o}lder and
Young's inequalities, we have
\begin{equation}\label{reg-time-20}
\frac{1}{2}\int_{\R^3}|\partial_t{u}|^2\,dx
+\frac{1}{2}\frac{d}{dt}\int_{\R^3}\tilde{G}[|Du|^2]\,dx\leq
C\int_{\R^3}|u|^2|\nabla u|^2.
\end{equation}
Again, integrating the estimate \eqref{reg-time-20} over the time
interval $[0,T_l]$, we obtain
\[
\int_0^{T_l}\int_{\R^3}|\partial_t
u|^2\,dxdt+\int_{\R^3}\tilde{G}[|Du(\cdot, T_l)|^2]\,dx
\]
\begin{equation}\label{reg-time-300}
\leq \int_{\R^3}\tilde{G}[|Du_0|^2]\,dx+
C\int_0^{T_l}\int_{\R^3}|u|^2|\nabla u|^2\,dxdt.
\end{equation}
Using Sobolev embedding, the second term in \eqref{reg-time-300} is
estimated as follows:
\[
\int_0^{T_l}\int_{\R^3}|u|^2|\nabla u|^2\,dxdt \leq \int_0^{T_l}
\|u\|^2_{L^{\infty}}\|\nabla u\|^2_{L^2}dt
\]
\begin{equation*}\label{convec-2}
\leq C\sup_{0<\tau\leq T_l}\|u(\tau)\|^2_{W^{2,2}}\int_0^{T_l} \|
\nabla u\|^2_{L^{2}}dt<C.
\end{equation*}
Therefore, we obtain $\partial_t{u} \in L^{2}(0,T; L^2(\R^3))$.\qed
\end{remark}

Using the method of Galerkin approximation, we construct the regular
solution satisfying the a priori estimates above.
\begin{pfthm1}
To precisely justify existence of a regular solution, we proceed by
a Galerkin method. In view of \cite[Lemma 3.10]{P}, there exists a
countable subset $\{w_i \}_{i=1}^\infty$ of the space
$\mathcal{V}:=\{\varphi \in \mathcal{D}(\bbr^3)^3: \nabla \cdot
\varphi=0\}$ that is dense in $H^{l}(\bbr^3)\cap \mathcal{V}$. We
consider Galerkin approximate equation for $u_m(t)=\sum_{i=1}^m
g_i^m(t)w_i$,
\begin{align*}
\begin{aligned}
\int_{\bbr^3}\bke{\partial_t u_m(t)\cdot w +G[|Du_m(t)|^2]
Du_m(t)):  Dw+ (u_m \otimes u_m)(t) : \nabla w }dx=0,\\
\end{aligned}
\end{align*}
where $w\in \mbox{span}\{w_1,w_2,\cdots,w_m\}$ and
$||u_m(0)-u_0||_{H^{l}} \rightarrow 0$ as $m\rightarrow \infty$.
From a priori estimate above, we obtain
\[
\|u_m\|_{L^{\infty}(0,T_l;H^{l}(\bbr^3))}\leq C,
\]
\[
\|u_m\|_{L^2(0,T_l; H^{l+1}(\bbr^3))} \leq C,
\]
\[
\|\partial_t u_m\|_{L^{2}(0,T_l;L^2(\bbr^3))}\leq C,
\]
where $C$ is independent on $m$. Due to the uniform boundedness
above, we can choose a subsequence $u_{m_k}$ of $u_{m}$ such that
\[
u_{m_k} \rightarrow u \quad \mbox{weakly-* in}\quad
L^{\infty}(0,T_1;H^{l}(\bbr^3)).
\]
\[
u_{m_k} \rightarrow u \quad \mbox{weakly in}\quad  L^2(0,T_l;
H^{l+1}(\bbr^3)).
\]
\[
\partial_t u_{m_k} \rightarrow \partial_t u \quad \mbox{weakly in}\quad L^{2}(0,T_l;L^2(\bbr^3)),
\]
as $k\rightarrow\infty$. Using the Aubin-Lions Lemma, we obtain
\begin{equation}\label{NNS-strong}
u_{m_k} \rightarrow u \quad \mbox{strongly in}\quad
L^{2}(0,T_l;H^{l}_{loc}(\bbr^3)), \quad k\rightarrow\infty.
\end{equation}
By the standard argument (see e.g., \cite{M-N-R-R}), we see that
$u\in L^{\infty}(0, T_l;H^{l}(\bbr^3))\cap
L^{2}(0,T_l;H^{l+1}(\bbr^3))$ is a solution of the following
equation in a weak sense
\begin{align*}
\begin{aligned}
\partial_t u  -\nabla \cdot \tilde{G}+ (u\cdot \nabla)u +\nabla p
 =0,
\end{aligned}
\end{align*}
that is, for any $\psi\in C_c^1(\bbr^3\times [0,T])$ such that
$\nabla \cdot \psi =0$, we have
\begin{align*}
\begin{aligned}
\int_{\bbr^3}u(t)\psi dx + \int^{t}_{0}\int_{\bbr^3}-u \cdot
\partial_t \psi + [\tilde{G}+ (u \otimes u)]:\nabla \psi
\,dxds =\int_{\bbr^3}u_0 \cdot \psi(x,\cdot) dx
\end{aligned}
\end{align*}
for a. e., $t\in[0,T]$. Here, $\tilde{G}$ is a weak limit of
$G[|Du_{m_k}|^2]Du_{m_k}$ in $L^{q'}((0,T_l)\times\bbr^3)$. Due to
the strong convergence \eqref{NNS-strong}, it follows that
\[
\tilde{G} = G[|Du|^2]Du \quad {\rm a.e.}\, \quad \mbox{in} \quad
\bbr^3\times (0, T_0).
\]
As $m\rightarrow\infty$, we conclude the existence of a solution of
\eqref{NNS}-\eqref{ini} in $\calX_l:=L^{\infty}([0, T_{l}];\;
H^{l}(\bbr^3))\cap L^{2}([0, T_{l}]; \; H^{l+1}(\bbr^3))$, $l\geq
3$. Next, we show uniqueness of the solution of
\eqref{NNS}-\eqref{ini} until the time $t\le T_l$. More precisely,
we prove uniqueness of weak and regular solutions, in case that
initial data are the same. Let $(u^1,p^1)$ be a weak solution and
$(u^2,p^2)$ a solution constructed above of the equation
\eqref{NNS}-\eqref{ini}. We consider the equation for
$\tilde{u}=u_{1}-u_{2}$ and $\tilde{p}=p_{1}-p_{2}$.
\begin{equation}\label{uniqu-100-10}
\begin{gathered}
\partial_t\tilde{u}-\nabla \cdot
(G[|Du_1|^2]Du_{1}-G[|Du_2|^2]Du_2) +(u_1\cdot
\nabla)\tilde{u}+(\tilde{u}\cdot \nabla)u_2+\nabla \tilde{p}=0,\\
\operatorname{div}\ \tilde{u}=0.
\end{gathered}
\end{equation}
Testing $\tilde{u}$ to the difference equation \eqref{uniqu-100-10}
and integrating it by parts, we have
\begin{equation}\label{unique-100}
\frac{1}{2}\frac{d}{dt}\|\tilde{u}\|^2_{L^2(\R^3)}+m_0\|\nabla
\tilde{u}\|^2_{L^2(\R^3)} \leq \int_{\R^3}|\tilde{u}||\nabla u_2|
|\tilde{u}|\,dx:=J_1,
\end{equation}
where we use the divergence free condition and Lemma \ref{r-400}.
Since $u_2 \in L^\infty(0,T_l; H^3(\R^3))\cap L^2(0,T_l;
H^4(\R^3))$, we can estimate the term $J_1$ as follows. Using
H\"{o}lder's inequality and Young's inequality, we have
\[
|J_1|\leq\|\tilde{u}\|^2_{L^2(\R^3)}\|\nabla
u_2\|_{L^{\infty}(\R^3)}.
\]
Thus, the estimate \eqref{unique-100} becomes
\[
\frac{d}{dt}\|\tilde{u}\|^2_{L^2(\R^3)}+m_0\|\nabla
\tilde{u}\|^2_{L^2(\R^3)}\leq
C\|\tilde{u}\|^2_{L^2(\R^3)}\|u_2\|_{H^{3}(\R^3)}.
\]
Due to Grownwall's inequality and $\tilde{u}(x,0)=0$, we conclude
that $\tilde{u}=0$, i. e., $u_1=u_2$. \qed
\end{pfthm1}

\section{Proof of corollary \ref{main-cor}}
For the proof of corollary \ref{main-cor}, we need the following
lemma, which is a simpler case of \cite[Lemma 2.2]{JS98}. For
clarity, we present its proof. For notational convention, we write
the average of $f$ on $E$ as $\aint_{E} f$, that is $\aint_{E}f
=\frac{1}{|E|}\int_{E} f$.

\begin{lemma}\label{holder-lem}
Let $\alpha\in (0,\frac{1}{2}]$. Suppose that $v\in L^\infty([0,T];
C^\alpha(\R^3))$ and $v_t \in L^\infty([0,T]; L^2(\R^3))$. Then $v
\in C_x^{\alpha}C_t^{\frac{\alpha}{2}}(\R^3\times [0,T])$.
\end{lemma}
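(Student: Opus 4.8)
The plan is to extract the spatial Hölder regularity directly from the hypothesis and to spend the real effort on the time regularity. Since $v\in L^\infty([0,T];C^\alpha(\R^3))$, for each fixed $t$ the map $x\mapsto v(x,t)$ is $\alpha$-Hölder with seminorm bounded uniformly in $t$; this already gives the $C_x^\alpha$ part of the claim. The substance of the lemma is therefore to show that $t\mapsto v(x,t)$ is Hölder continuous of exponent $\alpha/2$, uniformly in $x$. The difficulty is that the hypothesis on $v_t$ is only $L^\infty([0,T];L^2(\R^3))$, so one cannot bound $v(x,t_2)-v(x,t_1)=\int_{t_1}^{t_2}v_t(x,s)\,ds$ pointwise in $x$: the time derivative is controlled only in an averaged (in $x$) sense.

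To bridge this gap I would introduce spatial averages over balls and use the radius as a free parameter. Fix $x_0\in\R^3$ and $r>0$, and set $\bar v_r(t):=\aint_{B_r(x_0)}v(x,t)\,dx$. I would then establish two complementary estimates. First, using spatial Hölder continuity,
\[
\abs{v(x_0,t)-\bar v_r(t)}=\abs{\aint_{B_r(x_0)}\bke{v(x_0,t)-v(x,t)}\,dx}\leq \norm{v(t)}_{C^\alpha}\,r^\alpha\leq C r^\alpha.
\]
Second, since for fixed $x_0,r$ the function $t\mapsto\bar v_r(t)$ is absolutely continuous with $\tfrac{d}{dt}\bar v_r=\aint_{B_r(x_0)}v_t$, Cauchy--Schwarz in $x$ gives, for $t_1<t_2$,
\[
\abs{\bar v_r(t_2)-\bar v_r(t_1)}\leq \frac{1}{\abs{B_r}}\int_{t_1}^{t_2}\abs{B_r}^{1/2}\norm{v_t(s)}_{L^2(\R^3)}\,ds\leq C\,\frac{\abs{t_2-t_1}}{r^{3/2}},
\]
where I used $\abs{B_r}\sim r^3$ and $v_t\in L^\infty([0,T];L^2)$.

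Combining these through the triangle inequality,
\[
\abs{v(x_0,t_2)-v(x_0,t_1)}\leq C\bke{r^\alpha+\frac{\abs{t_2-t_1}}{r^{3/2}}},
\]
and optimizing in $r$ by choosing $r=\abs{t_2-t_1}^{2/(2\alpha+3)}$ balances the two terms and yields $\abs{v(x_0,t_2)-v(x_0,t_1)}\leq C\abs{t_2-t_1}^{\frac{2\alpha}{2\alpha+3}}$. The final point is that the assumption $\alpha\le\frac12$ is exactly what forces $\frac{2\alpha}{2\alpha+3}\ge\frac{\alpha}{2}$ (equivalently $2\alpha^2\le\alpha$), so on the bounded interval $[0,T]$ this time-Hölder exponent dominates $\alpha/2$ and we conclude $v\in C_x^\alpha C_t^{\alpha/2}$. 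The main obstacle is precisely the weak control on $v_t$: the averaging device, with the ball radius tuned against $\abs{t_2-t_1}$, is what converts the spatial regularity into the claimed parabolic-type time regularity, and verifying that the dimensional bookkeeping in dimension three produces an exponent $\ge\alpha/2$ under the constraint $\alpha\le\frac12$ is the crux.
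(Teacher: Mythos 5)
Your argument is correct and is essentially the paper's own proof: the same mollification by averages over balls of radius $r$, the same two bounds $Cr^\alpha$ (from spatial H\"older continuity) and $C|t_2-t_1|r^{-3/2}$ (from Cauchy--Schwarz and $v_t\in L^\infty_tL^2_x$), and the same optimization $r\sim|t_2-t_1|^{2/(2\alpha+3)}$ giving exponent $\tfrac{2\alpha}{2\alpha+3}\ge\tfrac{\alpha}{2}$ precisely when $\alpha\le\tfrac12$. No substantive difference.
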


\begin{proof}
To show $v \in C_x^{\alpha}C_t^{\frac{\alpha}{2}}(\R^3\times
(0,T_{0}))$, it suffices to show that $|v(x,t_1)-v(x,t_2)|\leq C
|t_1-t_2|^{\frac{\alpha}{2}}$. For $x\in \R^3$ and $\rho>0$ we
define
\[
v_{\rho}(x,t)=\aint_{B_{x,\rho}}v(y,t)\,dy.
\]
We note that
\begin{equation}\label{holdercon-100}
|v(x,t_1)-v(x,t_2)|\leq |v
(x,t_1)-v_{\rho}(x,t_1)|+|v_{\rho}(x,t_1)-v_{\rho}(x,t_2)|+|v_{\rho}(x,t_2)-v(x,t_2)|.
\end{equation}
By the hypothesis, first and third terms in \eqref{holdercon-100}
are estimated easily as
\[
|v(x,t_i)-v_{\rho}(x,t_i)|
=\Big|\aint_{B_{x,\rho}}v(x,t_i)-v(y,t_i)\,dy\Big|\leq
C\rho^{\alpha}, \quad i=1,2.
\]
For second term in \eqref{holdercon-100}, since $v_t \in
L^\infty([0,T]; L^2(\R^3))$, we obtain
\[
|v_{\rho}(x,t_2)-v_{\rho}(x,t_1)|=\Big|\aint_{B_{x,\rho}}v(x,t_2)-v(x,t_1)\,dy\Big|
=\Big|\aint_{B_{x,\rho}}\int_0^1\frac{d}{d\theta}v(x,\theta
t_2+(1-\theta) t_1)\,d\theta\,dy\Big|
\]
\[
\leq\aint_{B_{x,\rho}}\int_0^1|v_t(x,\theta t_2+(1-\theta)
t_1)||t_2-t_1|\,d\theta\,dy\leq
C\rho^{-\frac{3}{2}}\|v_t\|_{L^\infty(0,T;L^2(\R^3))}|t_2-t_1|.
\]
Putting $\rho=|t_2-t_1|^{1/\mu}$, we have
\[
|v(x,t_1)-v(x,t_2)| \leq C(\rho^{\alpha}+\rho^{-\frac{3}{2}+\mu}).
\]
Choosing $\mu=\alpha+\frac{3}{2}$, we see that
\[
|v(x,t_1)-v(x,t_2)|\leq C|t_2-t_1|^{1-\frac{3}{2\mu}},
\]
which implies $v\in C_x^{\alpha}C_t^{1-\frac{3}{2\mu}}$. It follows
from $\alpha\in (0,\frac{1}{2}]$ that $1-\frac{3}{2\mu}\ge
\alpha/2$. Therefore, we conclude that $v\in
C_x^{\alpha}C_t^{\frac{\alpha}{2}}$. This completes the proof.
\end{proof}

Next, we control mixed derivatives of $G[|Du|^2]Du$ for spatial and
temporal variables, in case that some mixed derivatives of $u$ are
bounded. Since the proof is similar to that of Lemma \ref{deriv-G},
likewise, the details are put off in the Appendix.
\begin{lemma}\label{deriv-G : time}
Let $l$ be a positive integer. If there exist constants
$\mathcal{A},~ \mathcal{B}>0$ such that
\begin{equation*}
\parallel\partial_x^{l-2m} \partial_t^{b} u \parallel_{L^\infty(0,T: L^2(\R^3))} \leq \mathcal{A},
\quad \parallel\partial_x \partial_t^{b} u \parallel_{L^\infty(0,T:
L^\infty(\R^3))} \leq \mathcal{B}
\end{equation*}
for any nonnegative integer $m$ and $b$ with $0\leq b \leq m \leq
\frac{l}{2}$, then
\begin{equation*}
\| \partial_x^{l-2m-1}\partial_t^m(G Du)
\|_{L^\infty(0,T:L^2(\R^3))} \leq C\mathcal{A}\big(\mathcal{B}+
\mathcal{B}^{l-m-1} \big).
\end{equation*}
\end{lemma}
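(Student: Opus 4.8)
The plan is to differentiate the product $G[|Du|^2]Du$ by the Leibniz and chain rules and then estimate the resulting terms in $L^\infty(0,T;L^2(\R^3))$, following the proof of Lemma \ref{deriv-G} almost verbatim, the only genuinely new feature being that the $m$ time derivatives must be distributed alongside the $l-2m-1$ spatial ones. First I would write $\partial_x^{l-2m-1}\partial_t^m\big(G[|Du|^2]Du\big)$ as a finite sum of monomials of the form $G^{(k)}[|Du|^2]\prod_{i=0}^{2k}\partial^{\beta_i}Du$, where each $\beta_i$ is a mixed space-time multi-index, the factor $\partial^{\beta_0}Du$ comes from the outer $Du$, the remaining $2k$ factors are produced by the $k$ derivatives of the argument $|Du|^2=Du:Du$ inside $G^{(k)}$, and the multi-indices obey the order constraints $\sum_i(\text{spatial order of }\beta_i)=l-2m-1$ and $\sum_i(\text{time order of }\beta_i)=m$. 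Since $\partial_t$ and $\partial_x$ obey the same product and chain rules, this is exactly the time-dependent analogue of the decomposition recorded in Lemma \ref{deriv-G}, with the $E_\alpha$-type remainder terms collecting the lower-order pieces.

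Next, for each monomial I would dispose of the coefficient $G^{(k)}[|Du|^2]$ using \eqref{G : property}. Iterating the bound with $\alpha=0$ gives $|G^{(k)}[|Du|^2]|\le C|G[|Du|^2]|\le C\|G[|Du|^2]\|_{L^\infty}$, while iterating with $\alpha=1$ gives $|G^{(k)}[|Du|^2]|\,|Du|^{2k}\le C|G[|Du|^2]|$; the latter is what lets me pair the $G$-coefficient with undifferentiated $Du$-factors and thereby keep the power of $\mathcal B$ from growing too fast. Every undifferentiated factor, and more generally every factor of first spatial order $\partial_x\partial_t^{b}u$ (time derivatives included), is bounded in $L^\infty$ by $\mathcal B$ by hypothesis, so the $G$-coefficient together with these factors contributes only a constant times a controlled power of $\mathcal B$.

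With the coefficient absorbed, the remaining task is to bound a product of genuinely space-differentiated factors $\partial^{\beta_i}Du$ in $L^2$. Here I would place the single factor of highest parabolic order (spatial order plus twice time order) in $L^2$ and estimate its norm by $\mathcal A$, using the hypothesis $\|\partial_x^{l-2m'}\partial_t^{b}u\|_{L^2}\le\mathcal A$ for the appropriate admissible pair $(m',b)$ with $b\le m'\le l/2$; the remaining factors are handled by Hölder together with a Gagliardo--Nirenberg interpolation between the first-order $L^\infty$ bound $\mathcal B$ and the top-order $L^2$ bound $\mathcal A$, exactly as in parts $(2)$ and $(3)$ of Lemma \ref{deriv-G}. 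This arrangement keeps the total exponent of $\mathcal A$ equal to one, and a count of the number of $L^\infty$ factors occurring shows it runs from one (for the top-order monomial, where all derivatives fall on a single $Du$) up to $l-m-1$, the total number of derivative operators applied (for the maximally spread-out monomial). Summing the finitely many monomials then yields the claimed bound $C\mathcal A\big(\mathcal B+\mathcal B^{l-m-1}\big)$.

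I expect the main obstacle to lie precisely in the last two steps. The delicate points are: verifying, by a parity argument on the spatial order, that the highest-order factor in each monomial can always be matched to one of the admissible $L^2$ norms $\partial_x^{l-2m'}\partial_t^{b}u$ with $b\le m'\le l/2$ (so that the bound by $\mathcal A$ is legitimate, and intermediate factors falling outside this even-order family are reduced to it by interpolation); and choosing, for each monomial, how far to iterate \eqref{G : property} in the $\alpha=1$ versus $\alpha=0$ direction so that the undifferentiated $|Du|$-factors absorbed into $G^{(k)}$ bring the exponent of $\mathcal B$ down to exactly $l-m-1$ rather than the naive larger count. These verifications are routine but lengthy, which is why, as for Lemma \ref{deriv-G}, I would defer the full details to the Appendix.
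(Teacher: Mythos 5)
Your proposal follows essentially the same route as the paper's Appendix proof: a Leibniz/chain-rule expansion of $\partial_x^{l-2m-1}\partial_t^m(G[|Du|^2]Du)$ into monomials $P_{\alpha+\beta}(G,Du)\,\tilde R_{\alpha+\beta}(Du)$, absorption of the $G^{(k)}$-coefficients and undifferentiated $Du$-factors via \eqref{G : property} (so that $|P_n(G,Du)|\le C G[|Du|^2]$), and then H\"older with Gagliardo--Nirenberg interpolation between the $L^2$ bound $\mathcal A$ at top spatial order and the $L^\infty$ bound $\mathcal B$ at first order, with the interpolation exponents summing to one so that $\mathcal A$ appears to the first power and $\mathcal B$ to a power between $1$ and $l-m-1$. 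The parity issue you flag does not actually arise in the paper's implementation, since each factor $\partial_x^{a_j}\partial_t^{b_j}Du$ is interpolated directly between $\|\partial_t^{b_j}Du\|_{L^\infty}\le\mathcal B$ and $\|\partial_x^{l-2m-1}\partial_t^{b_j}Du\|_{L^2}\le\mathcal A$ with the same $m$ and $b_j\le m\le l/2$, so admissibility is automatic.
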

\begin{proof}
See Appendix for the proof.
\end{proof}

Now we are ready to provide the proof of Corollary \ref{main-cor}.
\\
\begin{pfmain15}
First, we show the following statement:\\
Let $l$ and $m$ be integers with $0\leq m\leq \frac{l}{2}$ and
$l\geq 4$. If $u\in L^\infty(0,T: H^l(\R^3))$, then
\begin{equation}\label{eq-1 : cor}
\nabla_x^{l-2m}\partial_t^{b} u\in L^\infty(0,T:L^2(\R^3))
\end{equation}
for any integer $b$ satisfying $0\leq b \leq m $.

Indeed, using mathematical induction for $m$, we will prove \eqref{eq-1 : cor}.\\
$\bullet$ (Case $m=1$)\, Since $u\in L^\infty(0,T: H^l(\R^3))$, we
have
\begin{equation*}
\partial_x^{l-2} u\in L^\infty(0,T:L^2(\R^3)\cap L^\infty(\R^3)).
\end{equation*}
Next, we show
\begin{equation*}
\partial_x^{l-2} \partial_t u\in L^\infty(0,T:L^2(\R^3)).
\end{equation*}
From the equation \eqref{NNS}, we have
\begin{equation*}
\partial_x^{l-2} \partial_t u= - \partial_x^{l-2} \big(\nabla\cdot(G[|Du|^2] Du) + \nabla\cdot(u \otimes u) + \nabla p \big).
\end{equation*}
Hence, we are going to show
\begin{equation*}
 \partial_x^{l-1} \big(G[|Du|^2] Du\big), ~ \partial_x^{l-1} \big(u \otimes u) , ~ \partial_x^{l-1} p ~ \in  L^\infty(0,T:L^2(\R^3)).
\end{equation*}
Taking the divergence operator to \eqref{NNS}, we note that
\begin{equation*}
-\Delta p={\rm div}[{\rm div}(G[|Du|^2]Du)]+{\rm div}{\rm
div}(u\otimes u).
\end{equation*}
From the standard elliptic theory, it is enough to show
\begin{equation*}
 \partial_x^{l-1} \big(G[|Du|^2] Du\big), ~ \partial_x^{l-1} \big(u \otimes u)  ~ \in  L^\infty(0,T:L^2(\R^3)).
\end{equation*}
First, we note
\[
\| \partial_x^{l-1} \big(G[|Du|^2] Du\big)\|_{L^2}\leq \|G[|Du|^2]
\partial_x^{l-1} \big(Du\big)\|_{L^2} +
\sum_{\alpha=1}^{\alpha=l-1} \|\partial_x^\alpha G[|Du|^2]
~\partial_x^{l-1-\alpha}Du \|_{L^2}
\]
\[
\leq \|G[|Du|^2]\|_{L^\infty}\|\nabla^{l-1}Du \|_{L^2} + C \|G[
|Du|^2 ]\|_{L^\infty}\big( \|Du\|_{L^2} + \|Du\|_{L^2}^{l-1}\big)
\|\nabla^{l-1}Du \|_{L^2}
\]
\[
\leq C \|G[ |Du|^2 ]\|_{L^\infty} \big( 1 +
\|Du\|_{L^2(\R^3)}^{l-1}\big)\|\nabla^{l}u \|_{L^2(\R^3)},
\]
where we used \eqref{eq-3 : derive G} in the second inequality.

\noindent Next, we have
\[
\| \partial_x^{l-1} \big(u \otimes u \big)\|_{L^2} \leq
\sum_{\alpha=0}^{\alpha=l-1} \|\partial_x^\alpha u \otimes
\partial_x^{l-1-\alpha}u \|_{L^2}
\]
\[
\leq  \sum_{\alpha=0}^{\alpha=l-2} \|\partial_x^\alpha
u\|_{L^\infty} \| \partial_x^{l-1-\alpha}u \|_{L^2} + \|u
\|_{L^\infty}\|\partial_x^{l-1}u \|_{L^2}\leq C \|u \|_{H^l}^2.
\]

\noindent $\bullet$ (Case $m=n+1$)\,Assume \eqref{eq-1 : cor} holds
for the case $m=n $, that is
\begin{equation}\label{eq-2 : cor}
\nabla_x^{l-2n}\partial_t^{b} u\in L^\infty(0,T:L^2)
\end{equation}
for any integer $b$ such that $0\leq b \leq n$. We then show the
case $m=n+1$, that is,
\begin{equation}\label{eq-9 : cor}
\nabla_x^{l-2(n+1)}\partial_t^{b} u\in L^\infty(0,T:L^2), \qquad
0\leq b\leq n+1.
\end{equation}
Since the other cases $0\leq b\leq n$ can be shown similarly, we
only prove \eqref{eq-9 : cor} for $b=n+1$, which is
\begin{equation*}
\nabla_x^{l-2(n+1)}\partial_t^{n+1} u\in L^\infty(0,T:L^2).
\end{equation*}
\noindent Note that, similarly to the case $m=1$, once we have
\begin{equation}\label{eq-5 : cor}
\nabla_x^{l-2(n+1)+1}\partial_t^{n} \big(G[|Du|^2] Du\big), ~
\nabla_x^{l-2(n+1)+1}\partial_t^{n} \big(u \otimes u)  ~ \in
L^\infty(0,T:L^2(\R^3)),
\end{equation}
we conclude
\begin{equation}\label{eq-6 : cor}
\nabla_x^{l-2(n+1)+1}\partial_t^{n} p ~ \in L^\infty(0,T:L^2(\R^3)).
\end{equation}
Combining \eqref{eq-5 : cor} and \eqref{eq-6 : cor} with the
equation \eqref{NNS}, we get
\begin{equation*}\label{eq-7 : cor}
\nabla_x^{l-2(n+1)}\partial_t^{(n+1)} u\in L^\infty(0,T:L^2).
\end{equation*}
Therefore, it remains to prove \eqref{eq-5 : cor} to conclude the
proof. Let us prove \eqref{eq-5 : cor}. First of all, it follows
from Lemma \ref{deriv-G : time} that
\begin{equation*}
\begin{aligned}
 \| \partial_x^{l-2n-1}\partial_t^n \big(G[|Du|^2] Du\big)\|_{L^\infty(0,T:L^2(\R^3))}&\leq C\mathcal{A}(\mathcal{B}+\mathcal{B}^{l-n-1}),
 \end{aligned}
\end{equation*}
where
\begin{equation*}
\begin{aligned}
 \| \partial_x^{l-2n}\partial_t^bu\|_{L^\infty(0,T:L^2(\R^3))}\leq \mathcal{A}, \quad
\| \partial_x\partial_t^bu\|_{L^\infty(0,T:L^\infty(\R^3))}\leq
\mathcal{B}.
 \end{aligned}
\end{equation*}
Due to \eqref{eq-2 : cor}, we note that $\mathcal{A} , ~
\mathcal{B}<\infty$, which implies
\begin{equation*}
\begin{aligned}
 \| \partial_x^{l-2n-1}\partial_t^n \big(G[|Du|^2] Du\big)\|_{L^\infty(0,T:L^2(\R^3))}<\infty.
 \end{aligned}
\end{equation*}
Next, we can also observe from the assumption \eqref{eq-2 : cor}
that
\begin{equation*}
\begin{aligned}
 \| \partial_x^{l-2n-1}\partial_t^n \big(u \otimes u \big)\|_{L^2(\R^3)}
 <\infty.
 \end{aligned}
\end{equation*}
This completes the proof of \eqref{eq-1 : cor}.

Now, we finish the proof of the Corollary \ref{main-cor}. For some
integers $b$ and $m$ satisfying $0\leq b\leq m\leq \frac{l}{2}-1$,
we define
\begin{equation*}
v:=\partial_x^{l-2(m+1)}\partial_t^b u.
\end{equation*}
Then, via the estimate \eqref{eq-1 : cor}, we have
\begin{equation}\label{eq-3 : cor}
v \in L^\infty(0,T: H^2(\R^3)) \subset L^\infty(0,T:
C^{\frac{1}{2}}(\R^3)).
\end{equation}
Furthermore, we also have
\begin{equation}\label{eq-4 : cor}
\begin{aligned}
\partial_t v=\partial_x^{l-2(m+1)}\partial_t^{b+1} u\in L^\infty(0,T: L^2(\R^3))
\end{aligned}
\end{equation}
from the estimate \eqref{eq-1 : cor} due to the fact $0\leq b+1 \leq
m+1 \leq \frac{l}{2}$. We plug \eqref{eq-3 : cor} and \eqref{eq-4 :
cor} into Lemma \ref{holder-lem}, and conclude
\begin{equation*}
\begin{aligned}
\partial_x^{l-2(m+1)}\partial_t^{b} u\in C_x^{\frac{1}{2}}C_t^{\frac{1}{4}}(\R^3 \times [0,T]), \quad \forall ~0\leq b\leq m\leq
\frac{l}{2}-1,
\end{aligned}
\end{equation*}
which completes the proof of Corollary \ref{main-cor}.\qed
\end{pfmain15}

\section{Proof of Theorem \ref{main1}}
In this section, we present the proof of Theorem \ref{main1}. We
recall \eqref{eq-1 : H0}, \eqref{eq-3 : H1} and \eqref{eq-1 : H2}.
\begin{equation}\label{eq-1 : H0 small}
\frac{1}{2}\frac{d}{dt}||u||^2_{L^2(\R^3)}+\int_{\R^3}G[|Du|^2]
|Du|^2=0,
\end{equation}
\begin{equation}\label{eq-3 : H1 small}
\frac{1}{2}\frac{d}{dt}||\partial_{x_i}u||^2_{L^2(\R^3)}+\int_{\R^3} m_0|\partial_{x_i}Du|^2
\leq -\int_{\R^3}\partial_{x_i} \big((u\cdot \nabla) u \big )\cdot
\partial_{x_i} u,
\end{equation}
\[
\frac{1}{2}\frac{d}{dt}||\partial_{x_{j}}\partial_{x_{i}}u||^2_{L^2(\R^3)}+\int_{\R^3}m_0|\partial_{x_{j}}\partial_{x_{i}}D
u|^2 \leq C\|G[|Du|^2]\|_{L^\infty}(\|Du\|_{L^\infty} +
\|Du\|_{L^\infty}^2)\|\nabla^2 Du\|^2_{L^2}
\]
\begin{equation}\label{eq-1 : H2 small}
-\int_{\R^3}\partial_{x_{j}}\partial_{x_{i}}\big((u\cdot\nabla u)
\big)\cdot \partial_{x_{j}}\partial_{x_{i}} u.
\end{equation}
Via \eqref{eq-1 : H3}, \eqref{eq-2 : H3}, and \eqref{eq-5 : H3}, we
also remind that
\begin{equation}\label{eq-10 : H3 small}
\begin{aligned}
&\frac{1}{2}\frac{d}{dt}||\partial^3u||^2_{L^2(\R^3)} +
\int_{\R^3}G[|Du|^2]|\partial^3Du|^2\,dx
+ 2\int_{\R^3} G^{'}[|Du|^2]|Du:\partial^3 D u|^2\,dx\\
&\leq |I_{32}| + |I_{33}| - \int_{\R^3} E_3(D u:\partial^3D u)\,dx
-\int_{\R^3}\partial^3\big((u\cdot\nabla u) \big)\cdot
\partial^3 u\,dx,
\end{aligned}
\end{equation}
where $I_{32}$ and $I_{33}$ are given in \eqref{eq-2 : H3}. Now we
estimate each term on the right hand side of \eqref{eq-10 : H3
small} differently than we did in Theorem \ref{main}. We note first
that
\begin{equation*}
\begin{aligned}
|I_{32}|&\leq  \int_{\R^3}|\partial G[|Du|^2]| |\partial^2 Du||\partial^3 D u|\,dx\\
&\leq \|\partial G[|Du|^2]~ \partial^2 Du\|_{L^2}\|\nabla^3
Du\|_{L^2}\\
&\leq C \|G[ |Du|^2 ]\|_{L^\infty} \|Du\|_{L^\infty} \|\nabla^3
Du\|_{L^2}^2,
\end{aligned}
\end{equation*}
where we exploit \eqref{eq-3 : derive G} in the last inequality. For
$I_{33}$, again due to \eqref{eq-3 : derive G}, we have
\begin{equation*}
\begin{aligned}
|I_{33}|&\leq \int_{\R^3}|\partial^2 G[|Du|^2]| |\partial Du||\partial^3 D u|\,dx\\
&\leq \|\partial^2 G[|Du|^2]~ \partial Du\|_{L^2}\|\nabla^3 Du\|_{L^2}\\
&\leq C \|G[ |Du|^2 ]\|_{L^\infty} \big(\|Du\|_{L^\infty} +
\|Du\|_{L^\infty}^2\big)\|\nabla^3 Du\|_{L^2}^2.
\end{aligned}
\end{equation*}
Using again estimate \eqref{eq-3 : derive G}, we obtain
\begin{equation}\label{eq-13 : H3 small}
\begin{aligned}
\int_{\R^3}E_3(D u:\partial^3D u)\,dx
&  \leq \|E_3~ Du\|_{L^2}\|\nabla^3 Du\|_{L^2}\\
& \leq C \|G[ |Du|^2 ]\|_{L^\infty} \big(\|Du\|_{L^\infty} +
\|Du\|_{L^\infty}^3\big)\|\nabla^3 Du\|_{L^2}^2.
\end{aligned}
\end{equation}
Next, we estimate the terms caused by convection term. Using ${\rm
div}\,u=0$, we have
\begin{align}\label{eq-14 : H3 small}
\begin{aligned}
\int_{\bbr^3} \partial[(u\cdot \nabla)u]\cdot \partial u\,dx &=
\int_{\bbr^3} [(\partial u\cdot \nabla)u]\cdot \partial u\,dx\leq
C\|\nabla u \|_{L^\infty} \|\nabla u \|_{L^2}^2,
\end{aligned}
\end{align}
\begin{align}\label{eq-15 : H3 small}
\begin{aligned}
\int_{\bbr^3} \partial^2[(u\cdot \nabla)u]\cdot \partial^2 u\,dx
&=\int_{\bbr^3} [(\partial u\cdot \nabla)\partial u]\cdot \partial^2 u\,dx + \int_{\bbr^3} [(\partial^2 u\cdot \nabla)u]\cdot \partial^2 u\,dx\\
&\leq C \|\nabla u \|_{L^\infty} \|\nabla^2 u \|_{L^2}^2,
\end{aligned}
\end{align}
and
\begin{align}\label{eq-16 : H3 small}
\begin{aligned}
\int_{\bbr^3} \partial^3[(u\cdot \nabla)u]\cdot \partial^3 u\,dx &= \int_{\bbr^3} [(\partial^3 u\cdot \nabla)u]\cdot \partial^3 u\,dx
+ \int_{\bbr^3} [(\partial^2 u\cdot \nabla)\partial u]\cdot \partial^3 u\,dx \\
& \quad + \int_{\bbr^3} [(\partial u\cdot \nabla)\partial^2 u]\cdot \partial^3 u\,dx\\
&\leq C \|\nabla u \|_{L^\infty} \|\nabla^3 u \|_{L^2}^2 + C \|\nabla^2 u \|_{L^4}^2 \|\nabla^3 u \|_{L^2} \\
&\leq C \|\nabla u \|_{L^\infty} \|\nabla^3 u \|_{L^2}^2 .
\end{aligned}
\end{align}
Combining \eqref{eq-10 : H3 small}-\eqref{eq-13 : H3 small} and \eqref{eq-16 : H3 small}, we have
\begin{equation*}
\label{eq-2 : H3 small}
\begin{aligned}
&\frac{1}{2}\frac{d}{dt}||\partial^3u||^2_{L^2(\R^3)} +
\int_{\R^3}G[|Du|^2]|\partial^3Du|^2\,dx
+ 2\int_{\R^3} G^{'}[|Du|^2]|Du:\partial^3 D u|^2\,dx\\
&\leq  C \|G[ |Du|^2 ]\|_{L^\infty}
\big(\|Du\|^3_{L^\infty}+\|Du\|_{L^\infty} \big) \|\nabla^3
Du\|^{2}_{L^2} + C\|\nabla u\|_{L^\infty}\|\nabla^3 u\|^2_{L^2}.
\end{aligned}
\end{equation*}
Again, as before, we have
\begin{equation}\label{eq-1 : H3 small}
\begin{aligned}
&\frac{1}{2}\frac{d}{dt}||\partial^3u||^2_{L^2(\R^3)} +
\int_{\R^3}m_0 |\partial^3Du|^2\,dx
\\
&\leq  C \|G[ |Du|^2 ]\|_{L^\infty}
\big(\|Du\|^3_{L^\infty}+\|Du\|_{L^\infty} \big) \|\nabla^3
Du\|^{2}_{L^2} + C\|\nabla u\|_{L^\infty}\|\nabla^3 u\|^2_{L^2}.
\end{aligned}
\end{equation}
Finally, we combine \eqref{eq-1 : H0 small}, \eqref{eq-3 : H1
small}, \eqref{eq-1 : H2 small}, \eqref{eq-14 : H3 small},
\eqref{eq-15 : H3 small} and \eqref{eq-1 : H3 small} to conclude
\[
\frac{1}{2}\frac{d}{dt}||u||^2_{H^3(\R^3)}
+\int_{\R^3} m_0(|\nabla^3 Du|^2+|\nabla^2 Du|^2 + |\nabla
Du|^2 + | Du|^2)\,dx
\]
\[
\leq
C \|G[ |Du|^2 ]\|_{L^\infty} \big(\|Du\|^3_{L^\infty}+\|Du\|_{L^\infty}\big)\big(\|\nabla^3
Du\|^2_{L^2} +\|\nabla^2 Du\|^2_{L^2}\big )
\]
\[
+C\|\nabla u\|_{L^\infty} \big ( \|\nabla^3 u\|^2_{L^2} +\|\nabla^2
u\|^2_{L^2}+\|\nabla u\|^2_{L^2} \big)
\]
\[
\leq C
g(\|u|\|_{H^3})\big(\|u\|^3_{H^3}+\|u\|_{H^3}\big)\big(\|\nabla^3
Du\|^2_{L^2} +\|\nabla^2 Du\|^2_{L^2}\big )
\]
\begin{equation}\label{final-est-H3}
+C \|u\|_{H^3} \big ( \|\nabla^3 u\|^2_{L^2} +\|\nabla^2 u\|^2_{L^2}+\|\nabla u\|^2_{L^2} \big),
\end{equation}
where $g$ is a nondecreasing function defined in \eqref{eq-2 : H3 local}.
Since $\|u_0\|_{H^3} \leq \epsilon_0$, it follows from local
existence of solution that there exists a time $t^*>0$ such that
$\|u(t)\|_{H^3} \leq 2\epsilon_0$ for all $t\le t^*$. Thus, due to
estimate \eqref{final-est-H3}, we obtain for any $t\le t^*$
\begin{equation*}
\frac{1}{2}\frac{d}{dt}||u||^2_{H^3(\R^3)} +\int_{\R^3}(m_0
-2\epsilon_0 ) (|\nabla^3 Du|^2+|\nabla^2 Du|^2 + |\nabla Du|^2 + |
Du|^2)\,dx \leq 0,
\end{equation*}
which implies that after integrating it in time,
\[
\norm{u(t)}^2_{H^3(\R^3)}\le \norm{u_0}^2_{H^3(\R^3)}\le
\epsilon_0,\qquad t\le t^*.
\]
Repeating this procedure at $t=t^*$, we extend the solution in
$[t^*, 2t^*]$, which immediately implies that $T_3$ in Theorem
\ref{main} becomes infinity, i.e. $T_3=\infty$.

\subsection{Estimation of $\|u \|_{H^l}$, $l\ge 4$}

In case that $l=4$, via \eqref{eq-1 : H4}, \eqref{eq-2 : H4} and
\eqref{eq-7 : H4}, we remind that
\begin{equation}\label{eq-500-main-small}
\begin{aligned}
&\frac{1}{2}\frac{d}{dt}||\partial^4u||^2_{L^2(\R^3)}
 + \int_{\R^3}m_0|\partial^4D u|^2\,dx
 \\
\le & |I_{42}| + |I_{43}|+ |I_{44}|- \int_{\R^3}E_4(D u :\partial^4D
u)\,dx-\int_{\R^3}\partial^4\big((u\cdot\nabla u) \big) \cdot
\partial^4 u\,dx,
\end{aligned}
\end{equation}
where $I_{42},~I_{43}$ and $I_{44}$ are as defined in \eqref{eq-2 :
H4}.  We estimate the right hand side of \eqref{eq-500-main-small}.

\noindent For $I_{42}$, we exploit \eqref{eq-3 : derive G} to get
\begin{equation}\label{eq-200 : H4 small}
\begin{aligned}
|I_{42}|&\leq \int_{\R^3}|\partial G[|Du|^2]| |\partial^3 Du||\partial^4 D u|\,dx\\
& \leq C\|\partial G[|Du|^2]~ \partial^3 Du\|_{L^2}\|\nabla^4 Du\|_{L^2}\\
&\leq C \|G[ |Du|^2 ]\|_{L^\infty} \|Du\|_{L^\infty} \|\nabla^4
Du\|_{L^2}^2.
\end{aligned}
\end{equation}

\noindent Similarly, we have
\begin{equation}\label{eq-300-10-10 : H4 small}
\begin{aligned}
|I_{43}|&\leq \int_{\R^3}|\partial^2 G[|Du|^2]| |\partial^2 Du||\partial^4 D u|\,dx\\
& \leq C\|\partial^2 G[|Du|^2]~ \partial^2 Du\|_{L^2}\|\nabla^4 Du\|_{L^2}\\
&\leq C \|G[ |Du|^2 ]\|_{L^\infty} \big(\|Du\|_{L^\infty} +
\|Du\|_{L^\infty}^2 \big)\|\nabla^4 Du\|_{L^2}^2.
\end{aligned}
\end{equation}

\noindent For $I_{44}$, again due to \eqref{eq-3 : derive G}, we
obtain
\begin{equation}\label{eq-400 : H4 small}
\begin{aligned}
|I_{44}|&\leq \int_{\R^3}|\partial^3 G[|Du|^2]| |\partial Du||\partial^4 D u|\,dx\\
& \leq C\|\partial^3 G[|Du|^2]~ \partial Du\|_{L^2}\|\nabla^4 Du\|_{L^2}\\
&\leq C \|G[ |Du|^2 ]\|_{L^\infty} \big(\|Du\|_{L^\infty} +
\|Du\|_{L^\infty}^3 \big)\|\nabla^4 Du\|_{L^2}^2.
\end{aligned}
\end{equation}

\noindent It follows from \eqref{eq-3 : derive G} that
\begin{equation}\label{eq-500 : H4 small}
\begin{aligned}
&\int_{\R^3} |E_4(D u :\partial^4D u)|\,dx \leq \|E_4 Du\|_{L^2}\|\nabla^4 Du\|_{L^2}\\
&\leq C \|G[ |Du|^2 ]\|_{L^\infty} \big(\|Du\|_{L^\infty} +
\|Du\|_{L^\infty}^4 \big)\|\nabla^4 Du\|_{L^2}^2.
\end{aligned}
\end{equation}

\noindent  Lastly, for the convection term, using ${\rm div}\,u=0$,
we have
\begin{equation}\label{s-2-10:small}
\begin{aligned}
\int_{\bbr^3} \partial^{4}[(u\cdot \nabla)u]\cdot \partial^{4}u\,dx
&= \int_{\bbr^3} \sum_{\alpha=1}^4[(\partial^{\alpha} u\cdot \nabla)\partial^{4-\alpha} u]\cdot \partial^{4}u\,dx\\
&\leq C\big(\|\nabla u \|_{L^\infty} \|\nabla^{4}u \|_{L^2} + \|\nabla^{2}u \|_{L^\infty}\|\nabla^{3}u \|_{L^2} \big)\|\nabla^{4}u \|_{L^2}\\
&\leq C\big(\|\nabla u \|_{L^\infty} +  \|\nabla^{2}u \|_{L^\infty} \big) \big(\|\nabla^{4}u \|_{L^2}^2 + \|\nabla^{3}u \|_{L^2}^2 \big).
\end{aligned}
\end{equation}
In general, for $l \geq 4$, we note that
\begin{equation*}
\begin{aligned}
\int_{\bbr^3} \partial^{l}[(u\cdot \nabla)u]\cdot \partial^{l}u\,dx
& \leq C \Big(\sum_{\alpha =1}^{\lfloor \frac{l+1}{2} \rfloor} \|\nabla^\alpha u \|_{L^\infty} \|\nabla^{l+1-\alpha}u \|_{L^2} \Big )
\|\nabla^{4}u \|_{L^2}\\
& \leq C \Big(\sum_{\alpha =1}^{\lfloor \frac{l+1}{2} \rfloor} \|\nabla^\alpha u \|_{L^\infty} \Big )
\Big(\sum_{\alpha =\lfloor \frac{l}{2} \rfloor +1}^{l}  \|\nabla^{\alpha}u \|_{L^2}^2 \Big ).
\end{aligned}
\end{equation*}
Finally, we combine \eqref{final-est-H3}, \eqref{eq-500-main-small},
\eqref{eq-200 : H4 small}, \eqref{eq-300-10-10 : H4 small},
\eqref{eq-400 : H4 small}, \eqref{eq-500 : H4 small} and
\eqref{s-2-10:small} to conclude
\[
\frac{1}{2}\frac{d}{dt}||u||^2_{H^4(\R^3)}
+\int_{\R^3} m_0(|\nabla^4 Du|^2+|\nabla^3 Du|^2+|\nabla^2
Du|^2 + |\nabla Du|^2 + | Du|^2)\,dx
\]
\[
\leq
C\|G[|Du|^2]\|_{L^\infty}\big(\|Du\|^4_{L^\infty}+\|Du\|_{L^\infty}\big)
\big(\|\nabla^4 Du\|^2_{L^2}+\|\nabla^3 Du\|^2_{L^2}  +\|\nabla^2
Du\|^2_{L^2}\big )
\]
\[
+C\big(\|\nabla u\|_{L^\infty}^2 +\|\nabla u\|_{L^\infty} \big)\big ( \|\nabla^4 u\|^2_{L^2} +\|\nabla^3 u\|^2_{L^2} +\|\nabla^2 u\|^2_{L^2}+\|\nabla u\|^2_{L^2} \big)
\]
\begin{equation*}
\leq C \big( g(\|u|\|_{H^3}+1\big)
\big(\|u\|^4_{H^3}+\|u\|_{H^3}\big) \big(\|\nabla^4
Du\|^2_{L^2}+\|\nabla^3 Du\|^2_{L^2}  +\|\nabla^2 Du\|^2_{L^2}+
\|\nabla Du\|^2_{L^2}+ \| Du\|^2_{L^2}\big ).
\end{equation*}
Since $ \|u(t)\|_{H^3} \leq \epsilon_0 \ll 1 $ for $t\in[0,\infty)$, we have
\[
\frac{1}{2}\frac{d}{dt}||u||^2_{H^4(\R^3)} +\int_{\R^3} m_0
(|\nabla^4 Du|^2+|\nabla^3 Du|^2+|\nabla^2 Du|^2 + |\nabla Du|^2 + |
Du|^2)\,dx
\]
\[
\leq C\epsilon_0\big (\|\nabla^4 Du\|^2_{L^2} + \|\nabla^3
Du\|^{2}_{L^2} + \|\nabla^2 Du\|_{L^2}^2 + \|\nabla Du\|_{L^2}^2 +
\| Du\|_{L^2}^2\big ),
\]
and, therefore, we obtain
\[
\frac{1}{2}\frac{d}{dt}||u||^2_{H^4(\R^3)} +\int_{\R^3}  (m_0-C\epsilon_0)  (|\nabla^4 Du|^2+|\nabla^3 Du|^2+|\nabla^2 Du|^2 + |\nabla Du|^2 + | Du|^2)\,dx\\
\leq 0.
\]
This implies the global existence of solution in the class
$L^\infty(0,\infty : H^4)$.

For general $l> 4$, it follows from \eqref{eq-6 : I-l1} that
\[
\frac{1}{2}\frac{d}{dt}||\partial_{x_{\sigma(l)}}\partial_{x_{\sigma(i)}}\cdots\partial_{x_{\sigma(1)}}u||^2_{L^2(\R^3)}
+\int_{\R^3} m_0
|\partial_{x_{\sigma(l)}}\partial_{x_{\sigma(i)}}\cdots\partial_{x_{\sigma(1)}}
D u|^2\,dx
\]
\begin{equation}\label{eq-10 : Hl}
\leq |I_{l1}| +  |I_{l2}| +  |I_{l3}|,
\end{equation}
where $I_{l1}, I_{l2}$ and $I_{l3}$ are given in \eqref{eq-6 :
I-l1}. We then first estimate $I_{l1}$.
\begin{equation}\label{eq-1 : Hl}
\begin{aligned}
|I_{l1}|&\leq \sum_{\alpha=1}^{l-1}\int_{\R^3}|\partial^\alpha G[|Du|^2]| |\partial^{l-\alpha} D u||\partial^l D u| \,dx \\
& \leq \sum_{\alpha=1}^{l-1}\| \partial^\alpha G[|Du|^2] ~ \partial^{l-\alpha} D u\|_{L^2}  \|\nabla^l D u \|_{L^2} \\
& \leq C \|G[ |Du|^2 ]\|_{L^\infty} \sum_{\alpha=1}^{l-1}\big( \| Du\|_{L^\infty} + \| Du\|_{L^\infty}^\alpha \big)  \|\nabla^l D u \|_{L^2}^2 \\
&\leq C \|G[ |Du|^2 ]\|_{L^\infty} \big( \| Du\|_{L^\infty} + \| Du\|_{L^\infty}^{l-1} \big)  \|\nabla^l D u \|_{L^2}^2\\
&\leq Cg(\|u|\|_{H^3}) \big( \| u\|_{H^3} + \| u\|_{H^3}^{l-1} \big)
\|\nabla^l D u \|_{L^2}^2,
\end{aligned}
\end{equation}
where we exploit \eqref{eq-3 : derive G} in the third inequality.
Similarly, the term $|I_{l2}|$ is estimated
\begin{equation}\label{eq-2 : Hl}
\begin{aligned}
|I_{l2}|&\leq C\int_{\R^3}|E_{l}||D u| |\partial^l Du| \,dx \\
& \leq C\| E_{l} D u\|_{L^2}  \|\nabla^l D u \|_{L^2} \\
& \leq C \|G[ |Du|^2 ]\|_{L^\infty} \big( \| Du\|_{L^\infty} + \| Du\|_{L^\infty}^{l} \big)  \|\nabla^l D u \|_{L^2}^2\\
&\leq C g(\|u|\|_{H^3}) \big( \| u\|_{H^3} + \| u\|_{H^3}^{l} \big)
\|\nabla^l D u \|_{L^2}^2.
\end{aligned}
\end{equation}

\noindent Lastly, we estimate the convection term : Using divergence free condition
\begin{align*}
\begin{aligned}
\int_{\bbr^3} \partial^{\alpha}[(u\cdot \nabla)u]\cdot \partial^{\alpha}u\,dx
&=\sum_{k=1}^\alpha\int_{\bbr^3} [(\partial^{k}u\cdot \nabla)\partial^{\alpha-k}u]\cdot \partial^{\alpha}u\,dx\\
&\leq C\sum_{k=1}^\alpha \|\partial^{k}u \|_{L^p} \| \partial^{\alpha-k} \nabla u\|_{L^q} \|\partial^{\alpha}u \|_{L^2}\\
&\leq C \|\partial^{\alpha}  u \|_{L^2} \|  \nabla u\|_{L^\infty} \|\partial^{\alpha}u \|_{L^2},
\end{aligned}
\end{align*}
where we apply Gagliardo-Nirenberg to the third inequality. This gives us
\begin{align}\label{s-2}
\begin{aligned}
\sum_{\alpha=1}^l\int_{\R^3} \partial^{\alpha}[(u\cdot
\nabla)u]\cdot \partial^{\alpha}u\,dx &\leq C \|\nabla
u\|_{L^\infty}\sum_{\alpha=1}^l \|\partial^{\alpha}  u
\|_{L^2}^2\leq C \| u\|_{H^3}\sum_{\alpha=1}^l \|\partial^{\alpha} u
\|_{L^2}^2.
\end{aligned}
\end{align}
Plugging \eqref{eq-1 : Hl}, \eqref{eq-2 : Hl} and \eqref{s-2} into
\eqref{eq-10 : Hl}, since $ \|u(t)\|_{H^3} \leq \epsilon$ for
$t\in[0,\infty)$, we obtain
\begin{equation*}\label{eq-5 : Hl}
\frac{1}{2}\frac{d}{dt}||u||^2_{H^l(\R^3)} +\int_{\R^3}\big(m_0 -
\epsilon \big)(|\nabla^l Du|^2+|\nabla^{l-1} Du|^2+\cdots + |
Du|^2)\,dx \leq 0.
\end{equation*}
The above estimate implies that solution exists globally in time.
\qed

 \section*{Acknowledgments}
Kyungkeun Kang's work is partially supported by NRF-2017R1A2B4006484
and NRF-2015R1A5A1009350 and is also supported in part by the Yonsei
University Challenge of 2017. Hwa Kil Kim's work is supported by
NRF-2015R1D1A1A01056696 and NRF-2018R1D1A1B07049357. Jae-Myoung
Kim's work is supported by NRF-2015R1A5A1009350 and
NRF-2016R1D1A1B03930422.

\appendix
\section{}

In this Appendix, we provide the proofs of Lemma \ref{deriv-G} and
Lemma \ref{deriv-G : time}.

\begin{pflem2}
We note first that
\[
\partial_{x_i} G[|Du|^2] = 2 G^{'}[|Du|^2] (Du: \partial_{x_i} Du),\qquad i=1,2,3,
\]
From now on, we write $G[|Du|^2]$ and $G'[|Du|^2]$ as $G$ and $G'$,
respectively, unless any confusion is to be expected. For
convenience, we set $E_1=0$. Direct computations show that
\begin{equation}\label{eq-2 : appendix}
\partial_{x_j}\partial_{x_i} G = 2 G^{'} (Du: \partial_{x_j}\partial_{x_i} Du)  + 2 \big (\partial_{x_j}( G^{'}Du): \partial_{x_i} Du
\big),\qquad i, j=1,2,3.
\end{equation}
We define the second term of the righthand side in \eqref{eq-2 :
appendix} by $E_2$, namely
\begin{equation*}\label{eq-4 : appendix }
\begin{aligned}
E_2:= 2 \big (\partial_{x_j}( G^{'}Du): \partial_{x_i} Du \big).
\end{aligned}
\end{equation*}
Next, we define $E_l$, $l\ge 3$, inductively via multi-derivatives
of $G$.
\begin{equation*}\label{eq-3 : appendix}
\begin{aligned}
\partial_{x_k}\partial_{x_j}\partial_{x_i} G &= 2 G^{'} (Du: \partial_{x_k}\partial_{x_j}\partial_{x_i} Du)
+ 2 \big (\partial_{x_k}( G^{'}Du): \partial_{x_j}\partial_{x_i} Du \big) +\partial_{x_k} E_2 \\
&= 2 G^{'} (Du: \partial_{x_j}\partial_{x_i} Du)  + E_3,\qquad i,
j,k=1,2,3,
\end{aligned}
\end{equation*}
where
\begin{equation*}\label{eq-5 : appendix}
\begin{aligned}
E_3 &= 2 \big (\partial_{x_k}( G^{'}Du):
\partial_{x_j}\partial_{x_i} Du \big) +\partial_{x_k} E_2.
\end{aligned}
\end{equation*}
For $l\geq 4$ we define $E_l$ inductively as follows:
\begin{equation}\label{kkk-May1-10}
\partial_{x_{\sigma(l)}}\partial_{x_{\sigma(l-1)}} \cdots\partial_{x_{\sigma(1)}}G[|Du|^2]
= 2\big (G'[|Du|^2] Du:
\partial_{x_{\sigma(l)}}\partial_{x_{\sigma(l-1)}}
\cdots\partial_{x_{\sigma(1)}}D u \big) + E_l,
\end{equation}
where
\begin{equation*}
E_l=2\big(\partial (G^{'}Du): \partial^{l-1} Du \big) + \partial
E_{l-1}.
\end{equation*}
More precisely, we have
\[
E_l = 2 \partial_{x_{\sigma(l)}} G^{'}\big (Du:
\partial_{x_{\sigma(l-1)}}\cdots\partial_{x_{\sigma(1)}} Du \big)
+ 2 G^{'} \big (\partial_{x_{\sigma(l)}} Du:
\partial_{x_{{\sigma(l-1)}}}\cdots\partial_{x_{\sigma(1)}} Du \big)
+\partial_{x_{\sigma(l)}} E_{l-1}
\]
\[
= 2  G^{''} (Du : \partial_{x_{\sigma(l)}} Du)\big (Du:
\partial_{x_{\sigma(l-1)}}\cdots\partial_{x_{\sigma(1)}} Du \big)
\]
\begin{equation}\label{eq-7 : appendix }
+ 2 G^{'} \big (\partial_{x_{\sigma(l)}} Du:
\partial_{x_{\sigma(l-1)}}\cdots\partial_{x_{\sigma(1)}} Du \big)
+\partial_{x_{\sigma(l)}} E_{l-1}.
\end{equation}

Next, we estimate $E_2$ and $E_3$. From \eqref{eq-7 : appendix }, we
have
\begin{equation}\label{eq-8 : appendix }
\begin{aligned}
E_2
&\simeq G^{''} (Du: \partial Du)(Du: \partial Du) +   G^{'}  (\partial Du: \partial Du)\\
& \leq \big(| G^{''}| |Du|^2 + |G^{'}|\big) |\nabla Du|^2\leq
CG|\nabla Du|^2,
\end{aligned}
\end{equation}
where we used the properties \eqref{G : property} of $G$ to get the
last inequality. Combining \eqref{eq-7 : appendix } and \eqref{eq-8
: appendix }, we have
\begin{equation*}
\begin{aligned}
E_3& \simeq     G^{''}(Du: \partial Du)  (Du: \partial^2 Du) + G^{'} (\partial Du: \partial^2 Du) \\
&\quad +  \partial G^{''} (Du: \partial Du)(Du: \partial Du) + G^{''} (Du: \partial Du)[(\partial Du: \partial Du)  + (Du: \partial^2 Du)]\\
& \quad+  \partial G^{'}  (\partial Du: \partial Du) +   G^{'}  (\partial^2 Du: \partial Du) \\
& \simeq  G^{''}(Du: \partial Du)  (Du: \partial^2 Du) + G^{'} (\partial Du: \partial^2 Du)\\
& \quad+   G^{'''}(Du: \partial Du) (Du: \partial Du)(Du: \partial
Du) + G^{''} (Du: \partial Du)(\partial Du: \partial Du).
\end{aligned}
\end{equation*}
Let $P_n(G,Du)$ stand for the linear combination of
$G^{(k)}\big(Du\big)^l$ with $0\leq l\leq k\leq n,$ where $G^{(k)}$
is the $k-$th derivative of $G$. Using this notation, we rewrite
$E_3$ as follows
\begin{equation}\label{eq-14 : appendix }
\begin{aligned}
E_3& \simeq     P_3(G,Du)\big [(\partial Du)(\partial^2 Du)+
(\partial Du)^3 \big ] .
\end{aligned}
\end{equation}
Due to the property \eqref{G : property}, we note that
\[
|G^{(k)}[|Du|^2]||Du|^l \leq C |G^{(k-1)}[|Du|^2]||Du|^{l-1} \leq
\cdots \leq CG[|Du|^2]
\]
for any  $0\leq l\leq k$. This implies
\begin{equation}\label{eq-29 : appendix }
\begin{aligned}
|P_n(G,Du)|&\leq C G[|Du|^2], \qquad \forall ~ n\geq 1.
\end{aligned}
\end{equation}
Hence, we have
\begin{equation*}
|E_3|\leq C  G[|Du|^2]  \big (|\nabla Du||\nabla^2 Du|+   |\nabla
Du|^3 \big ).
\end{equation*}
This completes the proof of \eqref{eq-5 : derive G} in Lemma
\ref{deriv-G}.

It remains to prove \eqref{eq-3 : derive G}-\eqref{eq-4 : derive
G-v1} in Lemma \ref{deriv-G}. For notational convention, we denote
$k$-th order spatial derivative operators by $\partial^k $, unless
any confusion is to be expected. We then introduce $R_m(Du)$ as a
linear combination of
$$
(\partial^{a_1} Du)^{i_1} (\partial^{a_2} Du)^{i_2} \cdots
(\partial^{a_k} Du)^{i_k}, \qquad a_1,i_1,\cdots,a_k, i_k \in
\mathbb{N},
$$
 such that
$$a_1 i_1 + a_2 i_2 + \cdots + a_k i_k =m \qquad \mbox{and} \qquad 1\leq a_j \leq m-1, \quad \forall ~ j =1,\cdots, k. $$
Note that, for example, $R_3( Du)=(\partial Du)(\partial^2 Du)+
(\partial Du)^3$. We then rewrite \eqref{eq-14 : appendix } as
$$E_3\simeq     P_3(G,Du)R_3( Du).$$
In general, we will show that
\begin{equation}\label{eq-23 : appendix }
\begin{aligned}
E_n& \simeq     P_n(G,Du)R_n( Du), \qquad n \geq 1.
\end{aligned}
\end{equation}
$\bullet$ (Proof of \eqref{eq-23 : appendix })\,\, We prove
\eqref{eq-23 : appendix } by inductive argument. It is already shown
that \eqref{eq-23 : appendix } holds for $n=1,2,3$. Now we suppose
\eqref{eq-23 : appendix } is true for $n=m\ge 3$. It follows from
\eqref{eq-7 : appendix } that
\begin{equation}\label{eq-26 : appendix }
\begin{aligned}
E_{m+1}&= \big(G^{''}(Du)(\partial Du) + G^{'}\partial Du \big)(\partial^m Du) + \partial E_m\\
& \simeq \big(G^{''}(Du) + G^{'} \big)(\partial Du)(\partial^m Du)
 + \partial\big( P_m(G,Du) \big) R_m( Du)\\
 & \quad + P_m(G,Du) \partial \big(R_m( Du) \big).
\end{aligned}
\end{equation}
We first show that
\begin{equation}\label{eq-24 : appendix }
\partial\big( P_m(G,Du) \big)=P_{m+1}(G,Du) \partial Du.
\end{equation}
Indeed, for $0\leq l\leq k\leq  m$, we have
\begin{equation*}
\begin{aligned}
\partial\big (G^{(k)}(Du)^l\big) &=\partial\big (G^{(k)}\big)(Du)^l + G^{(k)}\partial\big ((Du)^l\big)\\
&=G^{(k+1)} (Du:\partial Du)(Du)^l + G^{(k)}(Du)^{l-1} \partial Du\\
&\simeq \big (G^{(k+1)}(Du)^{l+1} + G^{(k)}(Du)^{l-1}\big)\partial Du\\
& \simeq \big (G^{(k')}(Du)^{l'} \big)\partial Du,
\end{aligned}
\end{equation*}
where $0\leq l'\leq k'\leq  m+1$. This proves the identity
\eqref{eq-24 : appendix }.  Using the definition of $R_n$, the
direct computations show that
\begin{equation}\label{eq-25 : appendix }
\begin{aligned}
\partial R_m(Du) \simeq \partial R_{m+1}( Du).
\end{aligned}
\end{equation}
Plugging \eqref{eq-24 : appendix } and \eqref{eq-25 : appendix }
into \eqref{eq-26 : appendix }, we obtain
\begin{equation*}
\begin{aligned}
E_{m+1}& \simeq \big(G^{''}(Du) + G^{'} \big)(\partial Du)(\partial^m Du)  +  P_{m+1}(G,Du) (\partial Du) R_m(Du)\\
 & \quad + P_m(G,Du) R_{m+1}( Du) \\
 & \simeq P_{m+1}(G,Du) R_{m+1}( Du),
\end{aligned}
\end{equation*}
where we used that
\[
\big(G^{''}(Du) + G^{'} \big), ~ P_m(G,Du)\simeq P_{m+1}(G,Du),
\]
\begin{equation*}
(\partial Du)(\partial^m Du), ~(\partial Du) R_m( Du) \simeq
R_{m+1}( Du).
\end{equation*}
This completes the proof of \eqref{eq-23 : appendix }.\qed
\\
\\
Next, we will prove that for any $1\leq \alpha \leq l$
\begin{equation}\label{eq-28 : appendix }
\| E_{\alpha}~ \partial^{l-\alpha} Du\|_{L^2} \leq C  \|G[ |Du|^2 ]\|_{L^\infty} \| \nabla^l Du\|_{L^2} \big(\|Du\|_{L^\infty}+
\|Du\|_{L^\infty}^{\alpha} \big),
\end{equation}
and
\begin{equation}\label{eq-40 : appendix }
\| \partial^{\alpha}G~ \partial^{l-\alpha} Du\|_{L^2} \leq C  \|G[ |Du|^2 ]\|_{L^\infty} \| \nabla^l Du\|_{L^2} \big(\|Du\|_{L^\infty}+
\|Du\|_{L^\infty}^{\alpha} \big),
\end{equation}

$\bullet$ (Proofs of \eqref{eq-28 : appendix } and \eqref{eq-40 :
appendix })\,\, We note from \eqref{eq-29 : appendix } that
\begin{equation}\label{eq-60 : appendix }
\begin{aligned}
|P_n(G,Du)|&\leq C G[|Du|^2], 
\qquad
\forall ~ n\geq 1.
\end{aligned}
\end{equation}
Next, for some $a_1,i_1,\cdots, a_k, i_k \in \mathbb{N} $
such that $a_1 i_1 + a_2 i_2 + \cdots a_k i_k =\alpha $, we observe
that
\begin{equation}\label{eq-30 : appendix }
\begin{aligned}
&\|(\partial^{a_1} Du)^{i_1} (\partial^{a_2} Du)^{i_2} \cdots
(\partial^{a_k} Du)^{i_k} (\partial^{l-\alpha}Du)\|_{L^2}\\
&\leq C \|\partial^{a_1} Du \|_{L^{p_1 i_1}}^{i_1}\cdots
\|\partial^{a_k} Du \|_{L^{p_k i_k}}^{i_k} \|\partial^{l-\alpha}Du
\|_{L^q},
\end{aligned}
\end{equation}
where
\begin{equation}\label{eq-31 : appendix }
\frac{1}{p_1} + \cdots + \frac{1}{p_k} + \frac{1}{q} = \frac{1}{2}.
\end{equation}
With the aid of Gagliardo-Nirenberg inequality, we note that
\begin{equation}\label{eq-32 : appendix }
\begin{aligned}
\|\partial^{a_j} Du \|_{L^{p_j i_j}} &\leq C \|\partial^l Du\|_{L^2}^{\theta_j} \| Du\|_{L^\infty}^{1-\theta_j}, \qquad 1\leq j\leq k\\
\|\partial^{l-\alpha} Du \|_{L^q} &\leq C \|\partial^l
Du\|_{L^2}^{\theta_q} \| Du\|_{L^\infty}^{1-\theta_q},
\end{aligned}
\end{equation}
such that
\begin{equation}\label{eq-33 : appendix }
\begin{aligned}
\frac{1}{p_j i_j}&=\frac{a_j}{3} + \Big(\frac{1}{2}-\frac{l}{3} \Big)\theta_j, \qquad 1\leq j\leq k\\
\frac{1}{q}&=\frac{l-\alpha}{3} + \Big(\frac{1}{2}-\frac{l}{3}
\Big)\theta_q.
\end{aligned}
\end{equation}
It follows from \eqref{eq-33 : appendix } that $1 =
\sum_{j=1}^{k}\theta_j i_j + \theta_q$. Indeed,
\begin{equation}\label{eq-34 : appendix }
\begin{aligned}
\Big(\sum_{j=1}^{k}\frac{1}{p_j }\Big) + \frac{1}{q} &=
\sum_{j=1}^{k}\Big[\frac{a_j i_j}{3} + \Big(\frac{1}{2}-\frac{l}{3}
\Big)\theta_j i_j\Big]
+ \frac{l-\alpha}{3} + \Big(\frac{1}{2}-\frac{l}{3} \Big)\theta_q\\
\Longrightarrow ~~  \frac{1}{2} &= \frac{\alpha}{3} +
\Big(\frac{1}{2}-\frac{l}{3} \Big)\sum_{j=1}^{k}\theta_j i_j
+ \frac{l-\alpha}{3} + \Big(\frac{1}{2}-\frac{l}{3} \Big)\theta_q\\
\Longrightarrow ~~  \frac{1}{2}-\frac{l}{3} &=  \Big(\frac{1}{2}-\frac{l}{3} \Big)\Big[\sum_{j=1}^{k}\theta_j i_j + \theta_q\Big]\\
\Longrightarrow ~~ 1 &=  \sum_{j=1}^{k}\theta_j i_j + \theta_q,
\end{aligned}
\end{equation}
where we used $ \sum_{j=1}^{k}a_j i_j = \alpha$. We plug
\eqref{eq-32 : appendix } and \eqref{eq-34 : appendix } into
\eqref{eq-30 : appendix } to get
\begin{equation*}
\begin{aligned}
&\|(\partial^{a_1} Du)^{i_1} (\partial^{a_2} Du)^{i_2} \cdots (\partial^{a_k} Du)^{i_k} (\partial^{l-\alpha}Du)\|_{L^2}\\
&\leq C  \|\partial^l Du\|_{L^2}^{(\theta_1 i_1 +\cdots +\theta_k
i_k + \theta_q)}
\| Du\|_{L^\infty}^{\big[(1-\theta_1)i_1 +\cdots +(1-\theta_k)i_k + 1-\theta_q \big]}\\
& \leq C \|\partial^l Du\|_{L^2} \| Du\|_{L^\infty}^{(i_1 +\cdots
+i_k)},
\end{aligned}
\end{equation*}
which immediately implies that
\begin{equation}\label{eq-36 : appendix }
\begin{aligned}
\|R_\alpha( Du) (\partial^{l-\alpha}Du)\|_{L^2}\leq C \|\partial^l
Du\|_{L^2}\big( \| Du\|_{L^\infty}^{1}+\cdots +\|
Du\|_{L^\infty}^{\alpha} \big).
\end{aligned}
\end{equation}
We combine \eqref{eq-23 : appendix }, \eqref{eq-60 : appendix }, and
\eqref{eq-36 : appendix } to conclude \eqref{eq-28 : appendix }.

To get \eqref{eq-40 : appendix }, we first note
\begin{equation*}
\begin{aligned}
\|G^{'}[|Du|^2]( Du :\partial^\alpha Du ) \partial^{l-\alpha}Du
\|_{L^2}
&\leq \|G^{'}[|Du|^2] Du \|_{L^\infty}\|\partial^\alpha Du  \partial^{l-\alpha}Du \|_{L^2}\\
&\leq C\|G[ |Du|^2 ]\|_{L^\infty}  \|\partial^\alpha Du  \|_{L^p}\|
\partial^{l-\alpha}Du \|_{L^q},
\end{aligned}
\end{equation*}
where $\frac{1}{p}+\frac{1}{q}=\frac{1}{2}$. Using
Gagliardo-Nirenberg inequality, we have
\begin{equation*}
\begin{aligned}
\|\partial^\alpha Du  \|_{L^p}\|  \partial^{l-\alpha}Du \|_{L^q}
&\leq C \|\partial^l Du  \|_{L^2}^{\theta_p}\|Du \|_{L^\infty}^{1-\theta_p}\|\partial^l Du  \|_{L^2}^{\theta_q}\|Du \|_{L^\infty}^{1-\theta_q}\\
&\leq C \|\partial^l Du  \|_{L^2}^{\theta_p + \theta_q}\|Du
\|_{L^\infty}^{2-(\theta_p+\theta_q)},
\end{aligned}
\end{equation*}
with
\begin{equation}\label{eq-41 : appendix }
\begin{aligned}
\frac{1}{p}=\frac{\alpha}{3}+
\Big(\frac{1}{2}-\frac{l}{3}\Big)\theta_p \quad \mbox{and} \quad
\frac{1}{q}=\frac{l-\alpha}{3} +
\Big(\frac{1}{2}-\frac{l}{3}\Big)\theta_q.
\end{aligned}
\end{equation}
Due to \eqref{eq-41 : appendix } and
$\frac{1}{p}+\frac{1}{q}=\frac{1}{2}$, we get $\theta_p + \theta_q
=1$. Hence, we have
\begin{equation}\label{eq-42 : appendix }
\begin{aligned}
\|G^{'}[|Du|^2]( Du :\partial^\alpha Du ) \partial^{l-\alpha}Du
\|_{L^2} &\leq C \|G[ |Du|^2 ]\|_{L^\infty} \|\partial^l Du  \|_{L^2}\|Du
\|_{L^\infty}.
\end{aligned}
\end{equation}
We combine \eqref{kkk-May1-10}, \eqref{eq-28 : appendix } and
\eqref{eq-42 : appendix } to conclude
\begin{equation*}
\begin{aligned}
\|\partial^\alpha G[|Du|^2] \partial^{l-\alpha} Du\|_{L^2}
&\leq \|G^{'}[|Du|^2]( Du :\partial^\alpha Du ) \partial^{l-\alpha}Du \|_{L^2}+ \|E_\alpha \partial^{l-\alpha}Du \|_{L^2}\\
&\leq C \|G[ |Du|^2 ]\|_{L^\infty} \|\nabla^l Du  \|_{L^2}\big(\|Du
\|_{L^\infty} + \|Du \|_{L^\infty}^\alpha \big),
\end{aligned}
\end{equation*}
which gives us \eqref{eq-40 : appendix }.\qed
\\
\\
Finally, we show that for $l\geq 4$ there exist some $\beta_1>1$ and
$\beta_2,~\beta_3>0$ such that
\begin{equation}\label{eq-37 : appendix }
\| E_{\alpha}~ \partial^{l-\alpha} Du\|_{L^2} \leq C  \|G[ |Du|^2 ]\|_{L^\infty} \| \nabla^{l-1} Du\|_{L^2}^{\beta_1}
\|Du\|_{L^\infty}^{\beta_2},\qquad 1\leq \alpha \leq l,
\end{equation}
\begin{equation}\label{eq-44 : appendix }
\|\partial^\alpha G~ \partial^{l-\alpha} Du\|_{L^2} \leq C  \|G[ |Du|^2 ]\|_{L^\infty} \| \nabla^{l-1}
Du\|_{L^2}^{\beta_1}\|Du\|_{L^\infty}^{\beta_3},\quad 1\leq \alpha
\leq l-1.
\end{equation}
$\bullet$  (Proofs of \eqref{eq-37 : appendix } and \eqref{eq-44 :
appendix })\,\, Proofs of \eqref{eq-37 : appendix } and \eqref{eq-44
: appendix } are exactly same as those of \eqref{eq-28 : appendix }
and \eqref{eq-40 : appendix } except for following inequalities:
\begin{equation*}
\begin{aligned}
\|\partial^{a_j} Du \|_{L^{p_j i_j}} &\leq C \|\partial^{l-1} Du\|_{L^2}^{\theta_j} \| Du\|_{L^\infty}^{1-\theta_j}, \qquad 1\leq j\leq k\\
\|\partial^{l-\alpha} Du \|_{L^q} &\leq C \|\partial^{l-1}
Du\|_{L^2}^{\theta_q} \| Du\|_{L^\infty}^{1-\theta_q},
\end{aligned}
\end{equation*}
and
\begin{equation}\label{eq-38 : appendix }
\begin{aligned}
\frac{1}{p_j i_j}&=\frac{a_j}{3} + \Big(\frac{1}{2}-\frac{l-1}{3} \Big)\theta_j, \qquad 1\leq j\leq k\\
\frac{1}{q}&=\frac{l-\alpha}{3} + \Big(\frac{1}{2}-\frac{l-1}{3}
\Big)\theta_q,
\end{aligned}
\end{equation}
instead of \eqref{eq-32 : appendix } and \eqref{eq-33 : appendix }.
We combine \eqref{eq-31 : appendix } and \eqref{eq-38 : appendix }
to get
\begin{equation*}
\begin{aligned}
\sum_{j=1}^k \frac{1}{p_j}+ \frac{1}{q}=\frac{\sum_{j=1}^k a_ji_j +
l-\alpha}{3}
+\Big(\frac{1}{2}-\frac{l-1}{3} \Big)\sum_{j=1}^k\theta_j i_j +\theta q\\
\Longrightarrow \sum_{j=1}^k\theta_j i_j +\theta q =
\frac{2l-3}{2l-5} :=\beta_1.
\end{aligned}
\end{equation*}
Hence, we have
\begin{equation}\label{eq-45 : appendix }
\| E_{\alpha}~ \partial^{l-\alpha} Du\|_{L^2} \leq C  \|G[ |Du|^2 ]\|_{L^\infty} \| \nabla^{l-1} Du\|_{L^2}^{\beta_1}
\|Du\|_{L^\infty}^{1+i_1+\cdots+i_k -\beta_1}.
\end{equation}
Similarly, if $1\le \alpha\le l-1$, we have
\begin{equation}\label{eq-43 : appendix }
\begin{aligned}
\|G^{'}[|Du|^2]( Du :\partial^\alpha Du ) \partial^{l-\alpha}Du
\|_{L^2} &\leq C \|G[ |Du|^2 ]\|_{L^\infty} \|\partial^{l-1} Du
\|_{L^2}^{\beta_1}\|Du \|_{L^\infty}^{2-\beta_1}.
\end{aligned}
\end{equation}
Since $l\geq 4$, we have
$$1 < \beta_1 <2 \qquad \mbox{and}\qquad 0< 1+i_1+\cdots+i_k -\beta_1, ~ 2-\beta_1 <\alpha.$$
We plug this into \eqref{eq-45 : appendix } and \eqref{eq-43 :
appendix } to conclude the proof. \qed
\end{pflem2}
\\

Next we provide the proof of Lemma \ref{deriv-G : time}.

\begin{pflem4}
We note that
\begin{equation}\label{eq-1 : derive G - time}
\begin{aligned}
\partial_x^{l-2m-1}\partial_t^m(G Du)
=\sum_{\alpha=0}^{l-2m-1}\sum_{\beta=0}^{m}
\big(\partial_x^{\alpha}\partial_t^{\beta} G \big)~
\big(\partial_x^{\tilde{l}-\alpha}\partial_t^{m-\beta} Du \big),
\end{aligned}
\end{equation}
where $\tilde{l}:= l-2m-1$. Similar to the proof of Lemma
\ref{deriv-G}, we have
\begin{equation*}
\begin{aligned}
 \partial_x^{\alpha}\partial_t^{\beta} G =2\big(G^{'}Du : \partial_x^{\alpha}\partial_t^{\beta} Du\big) +
 E_{\alpha+\beta},
\end{aligned}
\end{equation*}
where
\begin{equation}\label{eq-3 : derive G - time}
\begin{aligned}
  E_{\alpha + \beta}\simeq P_{\alpha + \beta}\big(G, Du \big) \tilde{R}_{\alpha + \beta}\big( Du \big).
\end{aligned}
\end{equation}
Here, $P_{\alpha + \beta}\big(G, Du \big)$ is a linear combination
of $G^{(i)}(Du)^j$ with $0\leq i\leq j \leq  \alpha + \beta$ as
before and $\tilde{R}_{\alpha + \beta}\big( Du \big)$ has a form
such that
\begin{equation}\label{eq-4 : derive G - time}
\begin{aligned}
  \tilde{R}_{\alpha + \beta}\big( Du \big)=\big(\partial_x^{a_1}\partial_t^{b_1} Du\big)^{n_1}\cdots \big(\partial_x^{a_k}\partial_t^{b_k} Du\big)^{n_k}
\end{aligned}
\end{equation}
with $a_1 n_1 \cdots + a_k n_k=\alpha$ and $b_1n_1+ \cdots +
b_kn_k=\beta$.
\begin{equation}\label{eq-5 : derive G - time}
\begin{aligned}
 \parallel \big(\partial_x^{\alpha}\partial_t^{\beta} G \big)~
\big(\partial_x^{\tilde{l}-\alpha}\partial_t^{m-\beta} Du \big )
\parallel_{L^2(\R^3)} \leq \parallel \big(G^{'}Du :
\partial_x^{\alpha}\partial_t^{\beta} Du\big)
\big(\partial_x^{\tilde{l}-\alpha}\partial_t^{m-\beta} Du \big )
\parallel_{L^2(\R^3)}\\
+ \parallel  E_{\alpha +
\beta}\big(\partial_x^{\tilde{l}-\alpha}\partial_t^{m-\beta} Du \big
) \parallel_{L^2(\R^3)} := I + II.
\end{aligned}
\end{equation}
Let us first estimate $II$. We combine \eqref{eq-3 : derive G -
time} and \eqref{eq-4 : derive G - time}, and use the property
\eqref{eq-29 : appendix } to get
\begin{equation}\label{eq-6 : derive G - time}
II \leq C \|G[ |Du|^2 ]\|_{L^\infty}
\parallel\partial_x^{a_1}\partial_t^{b_1} Du\parallel_{L^{p_1n_1}}^{n_1}\cdots \parallel\partial_x^{a_k}\partial_t^{b_k} Du\parallel_{L^{p_kn_k}}^{n_k}
\parallel \partial_x^{\tilde{l}-\alpha}\partial_t^{m-\beta} Du \parallel_{L^{q}},
\end{equation}
where $\frac{1}{p_1} +\cdots \frac{1}{p_k}
+\frac{1}{q}=\frac{1}{2}$. We exploit Gagliardo-Nirenberg as in
\eqref{eq-32 : appendix } and have
\begin{equation*}
\begin{aligned}
\|\partial_x^{a_j} \partial_t^{b_j} Du \|_{L^{p_j n_j }}
&\leq C \|\partial_x^{\tilde{l}} (\partial_t^{b_j} Du)\|_{L^2}^{\theta_j} \|\partial_t^{b_j} Du\|_{L^\infty}^{1-\theta_j}, \qquad 1\leq j\leq k\\
\|\partial_x^{\tilde{l}-\alpha}\partial_t^{m-\beta} Du\|_{L^q} &\leq
C \|\partial_x^{\tilde{l}}(\partial_t^{m-\beta}
Du)\|_{L^2}^{\theta_q} \|\partial_t^{m-\beta}
Du\|_{L^\infty}^{1-\theta_q},
\end{aligned}
\end{equation*}
where
\begin{equation}\label{eq-8 : derive G - time}
\begin{aligned}
\frac{1}{p_j n_j}&=\frac{a_j}{3} + \Big(\frac{1}{2}-\frac{\tilde{l}}{3} \Big)\theta_j, \qquad 1\leq j\leq k\\
\frac{1}{q}&=\frac{\tilde{l}-\alpha}{3} +
\Big(\frac{1}{2}-\frac{\tilde{l}}{3} \Big)\theta_q.
\end{aligned}
\end{equation}
  Note that $\tilde{l}=l-2m-1$ and $b_j \leq m,~$ for all $j=1,\dots,k$. Hence, we have
 \begin{equation}\label{eq-9 : derive G - time}
\begin{aligned}
\|\partial_x^{a_j} \partial_t^{b_j} Du \|_{L^{p_j n_j }}^{n_j}
&\leq C \|\partial_x^{l-2m} (\partial_t^{b_j} Du)\|_{L^2}^{\theta_j n_j} \|\partial_t^{b_j} Du\|_{L^\infty}^{(1-\theta_j)n_j}\\
&\leq C \mathcal{A}^{\theta_j n_j}\mathcal{B}^{(1-\theta_j)n_j},
\qquad 1\leq j\leq k.
\end{aligned}
\end{equation}
Similarly, we obtain
 \begin{equation}\label{eq-10 : derive G - time}
\begin{aligned}
\|\partial_x^{\tilde{l}-\alpha}\partial_t^{m-\beta} Du\|_{L^q} &\leq
C \mathcal{A}^{\theta_q}\mathcal{B}^{1-\theta_q}.
\end{aligned}
\end{equation}
We combine \eqref{eq-6 : derive G - time}, \eqref{eq-9 : derive G -
time} and \eqref{eq-10 : derive G - time} to get
\begin{equation*}
\begin{aligned}
 II &\leq C \|G[ |Du|^2 ]\|_{L^\infty} \mathcal{A}^{\theta_1n_1+\cdots+\theta_k n_k+ \theta_q}\mathcal{B}^{(1-\theta_1)n_1
 +\cdots+(1-\theta_k)n_k+(1-\theta_q)}.
\end{aligned}
\end{equation*}
We plug $a_1n_1+\cdots + a_kn_k =\alpha$ into \eqref{eq-8 : derive G
- time} and get $\theta_1n_1+\cdots +\theta_k n_k+ \theta_q =1$.
Hence, we finally have
\begin{equation}\label{eq-12 : derive G - time}
\begin{aligned}
 II &\leq C \|G[ |Du|^2 ]\|_{L^\infty} \mathcal{A}\mathcal{B}^{n_1+\cdots+n_k}.
\end{aligned}
\end{equation}
Similarly, we have estimates for $I$
\begin{equation}\label{eq-13 : derive G - time}
\begin{aligned}
 I &\leq C \|G[ |Du|^2 ]\|_{L^\infty}  \parallel
 \big(\partial_x^{\alpha}\partial_t^{\beta} Du\big) \big(\partial_x^{\tilde{l}-\alpha}\partial_t^{m-\beta} Du \big ) \parallel_{L^2(\R^3)}\\
 & \leq C \|G[ |Du|^2 ]\|_{L^\infty} \mathcal{A}\mathcal{B}.
\end{aligned}
\end{equation}
We first note
$$n_1+\cdots+n_k \leq \alpha +\beta \leq l-m-1,$$
 and combine \eqref{eq-1 : derive G - time}, \eqref{eq-5 : derive G - time}, \eqref{eq-12 : derive G - time} and \eqref{eq-13 : derive G - time}
 to conclude
\begin{equation*}
\begin{aligned}
\parallel \partial_x^{l-2m-1}\partial_t^m(G Du)\parallel_{L^2(\R^3)}
\leq C \|G[ |Du|^2 ]\|_{L^\infty} \mathcal{A}\big(\mathcal{B}+
\mathcal{B}^{l-m-1} \big).
\end{aligned}
\end{equation*}
This completes the proof. \qed
\end{pflem4}

\setcounter{equation}{0}

\end{document}